\setlist[enumerate]{leftmargin=1.2em}
\setlist[itemize]{leftmargin=1.2em}
\definecolor{green}{rgb}{0,0.5,0} 
\newtheorem{theorem}{Theorem}[section]
\newtheorem{thm}{Theorem}[section]
\newtheorem{lem}[theorem]{Lemma}
\newtheorem{proposition}[theorem]{Proposition}
\newtheorem{prop}[theorem]{Proposition}
\theoremstyle{definition}
\theoremstyle{remark}
\newtheorem{rem}[theorem]{Remark}
\numberwithin{equation}{section}
\numberwithin{equation}{section}
\newcommand{\nrm}[1]{\Vert#1\Vert}
\newcommand{\br}[1]{\overline{#1}}
\newcommand{\nnrm}[1]{{\vert\kern-0.25ex\vert\kern-0.25ex\vert #1 
		\vert\kern-0.25ex\vert\kern-0.25ex\vert}}
\newcommand{\supp}{{\mathrm{supp}}\,}
\newcommand{\lap}{\Delta}
\newcommand{\rd}{\partial}
\newcommand{\nb}{\nabla}
\newcommand{\ift}{\infty}
\newcommand{\alp}{\alpha}
\newcommand{\bt}{\beta}
\newcommand{\gmm}{\gamma}
\newcommand{\lmb}{\lambda}
\newcommand{\tht}{\theta}
\newcommand{\omg}{\omega}
\newcommand{\Omg}{\Omega}
\newcommand{\bbR}{\mathbb R}
\newcommand{\calF}{\mathcal F}
\begin{document}
	
	\bibliographystyle{plain}
	\title{On the optimal rate of vortex stretching for axisymmetric Euler flows without swirl}

	\renewcommand{\thefootnote}{\fnsymbol{footnote}}
	\footnotetext{\emph{2020 AMS Mathematics Subject Classification:} 76B47, 35Q35}
	\footnotetext{\emph{Key words: Vortex stretching; axisymmetric flows without swirl; vorticity; Biot--Savart law} }
	\renewcommand{\thefootnote}{\arabic{footnote}}

	\author{Deokwoo Lim}
	\address{The Research Institute of Basic Sciences, Seoul National University, 1 Gwanak-ro, Gwanak-gu, Seoul 08826, Republic of Korea.}
	\email{dwlim95@snu.ac.kr}

	\author{In-Jee Jeong}
	\address{Department of Mathematical Sciences and RIM, Seoul National University, 1 Gwanak-ro, Gwanak-gu, Seoul 08826, and School of Mathematics, Korea Institute for Advanced Study, Republic of Korea.}
	\email{injee$ \_ $j@snu.ac.kr}

	\date\today
	\maketitle

	\begin{abstract}
		For axisymmetric flows without swirl {and compactly supported initial vorticity,} we prove the upper bound of $t^{4/3}$ for the growth of the vorticity maximum, which was conjectured by Childress [Phys. D, 2008] and supported by numerical computations from Childress--Gilbert--Valiant [J. Fluid Mech. 2016]. The key is to estimate the velocity maximum by the kinetic energy together with conserved quantities involving the vorticity. 
	\end{abstract}
	
	\section{Introduction}\label{sec:intro}
	In this paper, we study the optimal rate of \textit{vortex stretching} for one of the simplest three-dimensional flows, namely axisymmetric flows without swirl. Let us first recall the 3D vorticity equation for incompressible and inviscid flows: for vorticity $\omg(t,\cdot):\bbR^{3}\to\bbR^{3}$, we have \begin{equation}\label{eq:Vorticityform}
		\left\{
		\begin{aligned}
			&\rd_{t}\omg+u\cdot \nb\omg=\omg \cdot \nb u,  \\
			&u=\nb\times(-\lap)^{-1}\omg,  \\
			&\omg|_{t=0} =\omg_{0}.
		\end{aligned}
		\right.
	\end{equation}  The term $\omg\cdot\nb u$ in the vorticity equation is referred to as the \textit{vortex stretching} term, and the Biot--Savart law $u=\nb\times(-\lap)^{-1}\omg$ is given explicitly in $\bbR^{3}$ by 
	\begin{equation}\label{eq:3DBSlaw}
		\begin{split}
			u(t,x)&=\int_{\bbR^{3}}K_{3}(x-y)\times\omg(t,y)dy , \quad 
			K_{3}(x)=\frac{1}{4\pi}\frac{x}{|x|^{3}},\quad x\in \bbR^{3}. 
		\end{split}
	\end{equation}  Let us reduce \eqref{eq:Vorticityform} by taking axisymmetric flows without swirl: this means to take the ansatz
	\begin{equation}\label{eq:ansatz}
		u=u^{r}(r,z)e^{r}+u^{z}(r,z)e^{z}, \qquad 		\omg=\omg^{\tht}(r,z)e^{\tht}=(-\rd_{z}u^{r}+\rd_{r}u^{z})e^{\tht}
	\end{equation}
	in the cylindrical coordinates $(e^r, e^\tht, e^z)$. Then, \eqref{eq:Vorticityform} simplifies to \begin{equation}\label{eq:asEuler}
		\begin{split}
			\rd_{t}\omg^{\tht}+ (u^r \rd_{r} + u^{z} \rd_{z})\omg^{\tht}&=\frac{u^{r}}{r}\omg^{\tht}, 
		\end{split}
	\end{equation} and note that the vortex stretching term has been reduced to $r^{-1}u^{r}\omg^{\tht}$ in \eqref{eq:asEuler}. Still, depending on the regularity of $\omg^{\tht}(t=0)$, this term can lead to either instantaneous, finite-time, or infinite-time \textit{infinite growth} of the vorticity maximum $\nrm{\omg^{\tht}(t, \cdot )}_{L^{\infty}}$ (\cite{Elgindi-3D,EGM,JK-axi,CJ-axi,GMT2023,CMZ}). 
	
	\subsection{Main result}\label{ssec:result}
	
	Our main result gives an upper bound for the growth rate of  $\nrm{\omg^{\tht}(t, \cdot )}_{L^{\infty}}$ for smooth data, which is believed to be optimal, see the discussion below. 
	\begin{thm}\label{thm:growth-optimal} Assume that $\omg^{\tht}_{0}$ is compactly supported and $\nrm{ r^{-1} \omg^{\tht}_{0} }_{L^\infty(\bbR^{3})}$ is bounded. Then the corresponding unique global-in-time solution $\omg(t,\cdot)$ of \eqref{eq:Vorticityform} belonging to $L^{\infty}_{loc}(\bbR;L^{\infty}(\bbR^3))$ with the initial data $\omg_{0} = \omg^{\tht}_{0} e^{\tht}$ satisfies,  for some constant $ A(\omg_{0}^{\tht})>0$ depending only on $\omg_{0}^{\tht}$,
		\begin{equation}\label{eq:growth3/2}
			\nrm{\omg(t, \cdot )}_{L^{\ift}(\bbR^{3})} \le A(\omg_{0}^{\tht}) (1 + |t|)^{4/3}\quad\text{for all}\quad t \in \bbR. 
		\end{equation}
	\end{thm}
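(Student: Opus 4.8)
The plan is to combine the exact Lagrangian conservation of $\omg^\tht/r$ with a pointwise velocity estimate that extracts radial decay from the conserved kinetic energy, and then to close a differential inequality for the radial extent of the support.

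First I would record the conserved structure. Writing $\zt:=\omg^\tht/r$, the reduced equation \eqref{eq:asEuler} is equivalent to $\rd_t\zt+(u^r\rd_r+u^z\rd_z)\zt=0$, since $Dr/Dt=u^r$ exactly cancels the stretching term; hence $\zt$ is transported by the meridional flow (which is area-preserving for $r\,dr\,dz$, as $\rd_r(ru^r)+\rd_z(ru^z)=0$ by incompressibility), and $\nrm{\omg^\tht(t)/r}_{L^\ift}\equiv\nrm{\omg^\tht_0/r}_{L^\ift}=:C_0$ for all $t$. In particular $|\omg^\tht(t,r,z)|\le C_0\,r$ pointwise, so that if $R(t)$ denotes the maximal radial extent of $\supp\omg^\tht(t,\cdot)$ then $\nrm{\omg(t)}_{L^\ift(\bbR^3)}\le C_0\,R(t)$. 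Because the outer boundary of the support is carried by the flow with $\dot r=u^r$ along trajectories, $R$ is locally Lipschitz with $\tfrac{d}{dt}R(t)\le u^r$ evaluated at an outermost particle, which sits at radius $r=R(t)$. Thus the theorem reduces to proving $R(t)\lesssim(1+|t|)^{4/3}$, for which it suffices to bound the velocity at radius $R(t)$ by a constant multiple of $R(t)^{1/4}$.

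The core is therefore the velocity estimate, where I would use the conserved energy $E=\tfrac12\nrm{u(t)}_{L^2(\bbR^3)}^2$. Fix $x_0$ at radius $r_*=R(t)$ and a scale $\rho\le r_*/2$, and consider the solid torus $T$ of minor radius $\rho$ around the circle through $x_0$. Since $u$ depends only on $(r,z)$, one has $\int_{D_\rho}|u|^2\,dr\,dz\le(2\pi(r_*-\rho))^{-1}\int_T|u|^2\,dx\lesssim E/r_*$, where $D_\rho$ is the meridional disc; that is, the energy forces the \emph{meridional} $L^2$ mass of $u$ near radius $r_*$ to be only $(E/r_*)^{1/2}$. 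Applying the two-dimensional Gagliardo--Nirenberg inequality $\nrm{u}_{L^\ift(D_\rho)}\lesssim\rho^{-1}\nrm{u}_{L^2(D_{2\rho})}+\rho\,\nrm{\nb u}_{L^\ift}$ in the $(r,z)$ plane and optimizing in $\rho$ gives $|u(x_0)|\lesssim(E/r_*)^{1/4}\,\nrm{\nb u}_{L^\ift}^{1/2}$. Inserting $\nrm{\nb u}_{L^\ift}\lesssim\nrm{\omg}_{L^\ift}\le C_0\,r_*$ and $r_*=R$ yields $|u(x_0)|\lesssim E^{1/4}C_0^{1/2}R^{1/4}$. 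Feeding this into $\dot R\lesssim E^{1/4}C_0^{1/2}R^{1/4}$ and integrating (so that $\tfrac{d}{dt}R^{3/4}$ is bounded) produces $R(t)\lesssim(R(0)^{3/4}+E^{1/4}C_0^{1/2}|t|)^{4/3}$, whence \eqref{eq:growth3/2} with $A(\omg_0^\tht)$ depending on $C_0$, $E$, and the initial support; the case $t<0$ is identical by time-reversibility.

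The step I expect to be the main obstacle is this velocity estimate, and within it the borderline passage $\nrm{\nb u}_{L^\ift}\lesssim\nrm{\omg}_{L^\ift}$, which fails by a logarithm since Biot--Savart is unbounded on $L^\ift$. Taken naively this only gives $R(t)\lesssim(|t|\log|t|)^{4/3}$, so to reach the clean exponent I would replace the Lipschitz term by an $L^p$-based bound, using $\nrm{\nb u}_{L^p}\lesssim p\,\nrm{\omg}_{L^p}$ together with the conserved Casimirs $\nrm{\zt}_{L^p(\bbR^3)}$ (which give $\nrm{\omg}_{L^p}\lesssim R\,\nrm{\zt}_{L^p}$), and optimize $p\sim\log R$ so the logarithm is absorbed into a fixed constant. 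One must also check that the near-axis region is harmless, but this is immediate from $|\omg^\tht|\le C_0\,r$, which makes the vorticity small precisely where $r$ is small; the only genuinely active region is $r\sim R(t)$, which is exactly where the meridional-$L^2$ smallness $(E/r)^{1/2}$ coming from the energy is strongest.
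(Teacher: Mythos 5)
Your reduction to a differential inequality for the maximal support radius $R(t)$, the use of the conserved quantities $\nrm{\omg^{\tht}/r}_{L^{p}}$, and the way you extract the factor $(E/R)^{1/2}$ for the meridional $L^{2}$ mass of $u$ near radius $R$ from the kinetic energy via the metric factor all match the paper's strategy; the target bound $\sup_{z}|u^{r}(R,z)|\lesssim R^{1/4}$ is exactly the paper's key Proposition. The gap is in the one step you yourself flag, and your proposed patch does not close it. If you run the local Gagliardo--Nirenberg/Morrey estimate with $\nrm{\nb u}_{L^{p}}$ in place of $\nrm{\nb u}_{L^{\ift}}$, the Calder\'on--Zygmund constant enters \emph{multiplicatively}: $\nrm{\nb u}_{L^{p}(\bbR^{3})}\lesssim p\nrm{\omg}_{L^{p}}\lesssim pR\,C(\omg_{0})$, where $C(\omg_{0})$ bounds the conserved $L^{p}$ norms of $\omg/r$. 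Optimizing $|u(x_{0})|\lesssim \rho^{-1}(E/R)^{1/2}+\rho^{1-2/p}\,pR\,C(\omg_{0})$ over $\rho$ gives, for any $p\geq4$, a value of size $E^{1/4}C^{1/2}p^{1/2}R^{1/4}$ up to factors of the form $(pR^{3/2}C/E^{1/2})^{1/(2p-2)}$. Choosing $p\sim\log R$ does neutralize those latter factors, but it cannot remove $p^{1/2}$ itself: you are left with $|u^{r}|\lesssim R^{1/4}(\log R)^{1/2}$, hence $\frac{d}{dt}R\lesssim R^{1/4}(\log R)^{1/2}$ and $R(t)\lesssim(1+t)^{4/3}(\log(2+t))^{2/3}$. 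The same $(\log R)^{1/2}$ loss occurs if you instead use the log-Lipschitz modulus of continuity of $u$. Since $t^{4/3}$ is precisely the conjectured optimal rate, this logarithm is not cosmetic: as written, your argument proves a strictly weaker statement than \eqref{eq:growth3/2}.

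The way to remove the logarithm --- and this is what the paper does --- is to never estimate derivatives of $u$ at all. Write $u^{r}(R,z)=\iint_{\Pi}F^{r}\omg$, split off the far region at distance $\gtrsim R$ from $(R,z)$ (controlled by the conserved quantity $\nrm{\omg/r}_{L^{\ift}}^{1/3}\nrm{\omg/r}_{L^{1}}^{2/3}$, using the enhanced decay of $\calF'$), and in the annulus between scales $lR$ and $R/2$ use $\omg=\rd_{\br{r}}u^{z}-\rd_{\br{z}}u^{r}$ and integrate by parts, so that all derivatives fall on the kernel $F^{r}$ and on the cutoffs; Cauchy--Schwarz with the weight $\br{r}^{1/2}$ then brings in $\nrm{u}_{L^{2}(\bbR^{3})}$ with no Calder\'on--Zygmund constant, giving a contribution $\lesssim(lR^{3/2})^{-1}\nrm{u}_{L^{2}}$. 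The innermost disc of radius $lR$ is estimated directly via $|\omg|\le\nrm{\omg/r}_{L^{\ift}}\br{r}$, giving $\lesssim lR^{2}\nrm{\omg/r}_{L^{\ift}}$, and optimizing $l$ produces exactly $R^{1/4}\nrm{u}_{L^{2}}^{1/2}\nrm{\omg/r}_{L^{\ift}}^{1/2}$, log-free. Your torus/energy computation survives verbatim inside this scheme; it is only the pointwise step that must be done by integration by parts against the kernel rather than by Gagliardo--Nirenberg plus Calder\'on--Zygmund.
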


	\begin{rem} Any smooth and compactly supported initial vorticity satisfies the assumption of Theorem \ref{thm:growth-optimal}; we have $\nrm{ r^{-1} \omg^{\tht}_{0} }_{L^\infty}<\infty$   once $\omg_{0} = \omg^{\tht}_{0}e^{\tht} \in C^{1}(\bbR^{3}).$ The constant $ A(\omg_{0}^{\tht})$ depends only on $\nrm{ \omg^{\tht}_{0} }_{L^\infty}$, $\nrm{ r^{-1} \omg^{\tht}_{0} }_{L^\infty}$, $\nrm{ r^{-1} \omg^{\tht}_{0} }_{L^1}$, and $\nrm{u_{0}}_{L^{2}}$. Since $\omg^{\tht}_{0}$ is compactly supported, $\nrm{ r^{-1} \omg^{\tht}_{0} }_{L^\infty}<\infty$ implies that $ \nrm{\omg_0^\tht}_{L^\infty}$ and $\nrm{r^{-1}\omg^\tht_{0}}_{L^p}$ (for any $p<\infty$) are bounded. Furthermore, from the fast decay of axisymmetric Biot--Savart kernel (see \S \ref{subsec:BS} below), the initial velocity belongs to $L^{2}(\bbR^{3})$. 
	\end{rem}

	\subsection{Wellposedness for axisymmetric Euler without swirl} In this section, let us clarify the wellposedness issue for \eqref{eq:asEuler} as well as \eqref{eq:Vorticityform}. A comprehensive survey of wellposedness results for \eqref{eq:Vorticityform} can be found in \cite{BL1,BL3D,DE,Elgindi-3D}.
	
	To begin with, global regularity of axisymmetric Euler without swirl \eqref{eq:asEuler} goes back at least to the work of Ukhovskii--Yudovich \cite{UY} in 1968, where global in time existence, uniqueness, propagation of regularity was shown assuming that \begin{equation*}
		\begin{split}
			\omg_{0} \in H^{m}(\bbR^{3}), \qquad \frac{\omg_{0}}{r} \in (L^{2} \cap L^{\infty})(\bbR^{3})
		\end{split}
	\end{equation*} with $m\ge2$. Such a regularity assumption on $r^{-1}\omg_{0}$ was natural, since \eqref{eq:asEuler} could be written as \begin{equation}\label{eq:asEuler-rel}
		\begin{split}
			\rd_{t}\, \frac{\omg^{\tht}}{r} + (u^r \rd_{r} + u^{z} \rd_{z})\, \frac{\omg^{\tht}}{r}  = 0. 
		\end{split}
	\end{equation} In particular, we have the a priori estimate \begin{equation*}
		\begin{split}
			\left\Vert \frac{\omg^{\tht}(t,\cdot)}{r} \right\Vert_{L^p(\bbR^3)} = 		\left\Vert \frac{\omg^{\tht}_{0}}{r} \right\Vert_{L^p(\bbR^3)}
		\end{split}
	\end{equation*} for any $0\le p \le \infty$ by incompressibility.\footnote{In the case $p=0$, it means the conservation of the volume of the support of $\omg^{\tht}$.} Therefore, this work already clarified that once the initial datum is sufficiently regular, the corresponding solution to \eqref{eq:asEuler} is regular and global. 
	
	On the other hand, local in time existence and uniqueness for the full 3D Euler equations \eqref{eq:Vorticityform} was obtained in H\"older spaces (\cite{Lichtenstein1930a,Holder,Wo}) in the 1930s, namely for $\omg_{0} \in C^{\alp}(\bbR^{3})$ with any $\alp>0$ (assuming some decay at infinity). Local regularity for $\omg_{0} \in H^{s}(\bbR^3)$ with any $s>3/2$ was obtained by Kato \cite{Ka} and for $W^{s,p}(\bbR^{3})$ with $sp>3$ by Kato--Ponce \cite{KaPo}. 
	
	These local regularity results guarantee that once $\omg_{0}$ belongs to such function spaces and takes the special form \eqref{eq:ansatz}, then the unique local solution to \eqref{eq:Vorticityform} solves \eqref{eq:asEuler}.\footnote{For the latter property to hold, one just requires uniqueness for \eqref{eq:Vorticityform} \textit{and} existence for \eqref{eq:asEuler} with certain regularity assumption. Uniqueness for \eqref{eq:Vorticityform} (but not existence) is guaranteed just with $\omg \in L^{\infty}_{t,x}$, see \cite{Danaxi}. Existence for \eqref{eq:Vorticityform} (but not uniqueness) is guaranteed simply with $r^{-1}\omg_{0}^{\tht} \in L^{1}\cap L^{p}$ with any $p>1$ \cite{JLN2}.} However, such a regularity assumption in general does not guarantee \textit{global} regularity for \eqref{eq:asEuler} and in particular, $H^{s}$ global regularity with $s>3/2$ was obtained later by Shirota--Yanagisawa \cite{SY} in 1994. (See also \cite{Raymond}.) 
	
	In two dimensions, the (scalar) vorticity is simply being advected by the flow, and the celebrated Yudovich theorem \cite{Yu63} gives global uniqueness and existence for vorticity belonging to $(L^\infty \cap L^1) (\bbR^2)$. Since the structure of \eqref{eq:asEuler-rel} is similar, it is tempting to obtain global regularity for \eqref{eq:asEuler} just based on $L^{p}$ regularity (i.e. without any differentiability) and  Danchin \cite{Danaxi} in 2007 achieved this with \begin{equation*}
		\begin{split}
			\omg_{0} \in (L^{3,1} \cap L^{\infty})(\bbR^{3}), \qquad \frac{\omg_{0}}{r} \in L^{3,1}(\bbR^{3}), 
		\end{split}
	\end{equation*} where $L^{3,1}(\bbR^{3})$ is a Lorentz space which strictly embeds into $L^{3}$. As a consequence, it gives global regularity for $\omg_{0} \in (C^{\alp}\cap L^{3,1})(\bbR^{3})$ with $\alp>1/3$. Abidi--Hmidi--Keraani (\cite{AHK}) extended global regularity to the critical Besov spaces $B^{3/p}_{p,1}(\bbR^{3})$ with all $p \in [1,\infty]$ in 2010.
	
	Recently, Elgindi \cite{Elgindi-3D} and Elgindi--Ghoul--Masmoudi \cite{EGM} proved finite time singularity formation of \eqref{eq:asEuler} for $\omg_{0}\in C^\alp(\bbR^{3})$ with sufficiently small $\alp>0$. Their unique local-in-time solution belonging to $L^{\infty}(0,T;C^{\alp}(\bbR^{3}))$ satisfies \begin{equation*}
		\begin{split}
			\lim_{t \to T^{-}} \nrm{\omg^\tht(t,\cdot)}_{L^{\infty}(\bbR^{3})} = \infty , 
		\end{split}
	\end{equation*} where $T$ is the singularity time. For a more recent improvement on finite-time blowup, see \cite{CMZ}. Moreover, \cite{JK-axi} proved that $\nrm{\omg^\tht(t,\cdot)}_{L^{\infty}} $ can \textit{instantaneously} blow up as soon as $t>0$ with compactly supported $\omg_{0}^{\tht} \in L^{\infty}(\bbR^{3})$ satisfying $r^{-1}\omg_{0}^{\tht} \in L^{3,q}(\bbR^{3})$ with all $q$ sufficiently large.\footnote{In particular this shows sharpness of Danchin's $L^{3,1}$ assumption. While it is expected that illposedness occurs for $L^{3,1+\varepsilon}$ for any $\varepsilon>0$, this could be a technically challenging problem.} In all these negative results, the \textit{lack of regularity} near the axis $\{ r = 0 \}$ is the driver of vorticity growth. 
	
	Lastly, even when the vorticity is regular near the axis, infinite-time infinite growth of the vorticity maximum for \eqref{eq:asEuler} is still possible: Very recently, some lower bounds have appeared (\cite{CJ-axi,GMT2023}), which establishes existence of data which completely vanishing near the axis but satisfies \begin{equation*}
		\begin{split}
			\nrm{\omg(t, \cdot )}_{L^{\ift}(\bbR^{3})} \ge c\,t^{\bt} \qquad \mbox{for all}\qquad t \ge 0 
		\end{split}
	\end{equation*} for some $c, \bt>0$. The value of $\bt$ can be as close to $3/8$ in \cite{GMT2023} and is smaller in \cite{CJ-axi}. The questions that we focus on this work are: what is the optimal rate of growth for such regular vorticities, and when is it achieved. Let us now proceed to review previous works in this direction.

	\begin{figure}
		\centering
		\includegraphics[scale=0.25]{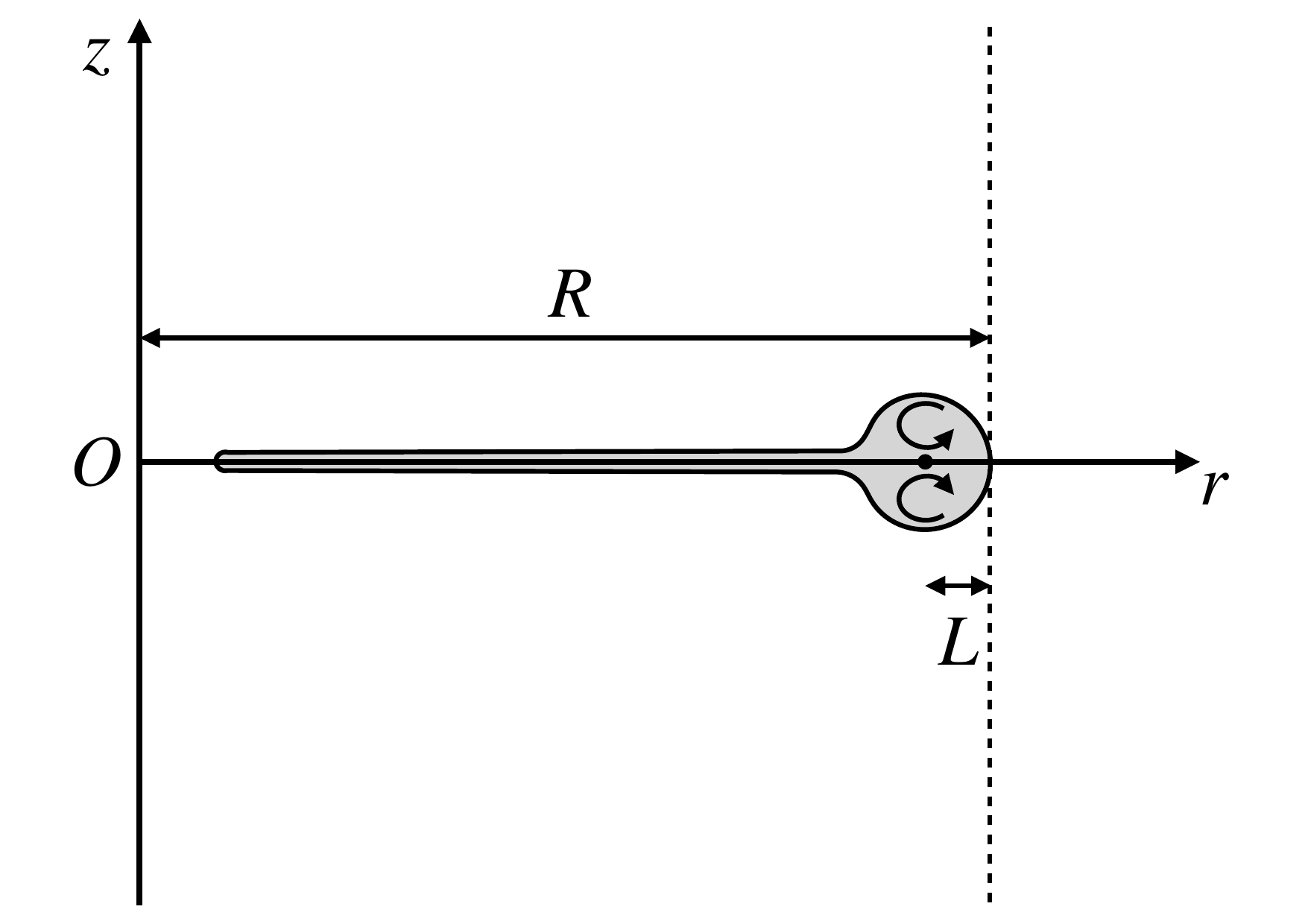}
		\caption{Vortex dipole with maximal support radius $ R $ and length scale $L$.} 
		\label{fig:dipole}
	\end{figure}

	\subsection{Optimal vorticity growth and eroding dipole model by Childress}
	
	It is well known to experimental physicists that vortex stretching occurs when two anti-parallel vortex rings move towards each other. This so-called ``head on collision'' scenario has been investigated intensively (\cite{Le,LN,ShLe,SSH,CWCCC}), where it was observed that the maximal vortex strength increases together with the Reynolds number. A simple but illuminating mathematical model of this scenario can be given based on \eqref{eq:asEuler} (formally taking the Reynolds number to be infinite), by taking the initial data to be of the form \begin{equation}\label{eq:rings}
		\begin{split}
			{\omg_{0}^{\tht}}(r,z) = - \varphi(r,z) + \varphi(r, -z),
		\end{split}
	\end{equation} where $\varphi \ge 0$ is a bump function supported near some point in $(r,z)$, say $(1,1)$. This models two anti-parallel vortex rings, and the key features are that the vorticity is odd symmetric in $z$ and non-positive for $z\ge0$. Under these two assumptions, it can be proved that (\cite{CJ-axi,GMT2023})\begin{equation*}
		\begin{split}
			\frac{d}{dt} \iint_{\Pi} r^{2}|\omg^{\tht}(t,r,z)| dr dz > 0, \qquad 	\frac{d}{dt} \iint_{\Pi} |z\omg^{\tht}(t,r,z)| dr dz < 0
		\end{split}
	\end{equation*} where $\Pi := \left\{ (r,z) : r \ge 0 \right\}$. This shows that the vorticity moves southeast in the $(r,z)$-plane in average. Actually, direct numerical simulations of \eqref{eq:asEuler} reveals that for initial data of the type \eqref{eq:rings}, the solution $\omg^{\tht}(t,\cdot)$ rather quickly attaches to the plane $\{ z = 0 \}$ as it slides outwards in the $r$-axis (\cite{Carley,Shariff.Leonard.Zabusky.Ferziger.1988,Shariff.Leonard.Ferziger.2008}). Furthermore, in all of the aforementioned vorticity growth results (\cite{Elgindi-3D,EGM,JK-axi,CJ-axi,GMT2023,CMZ}), the data indeed share the same symmetry and sign condition with \eqref{eq:rings}: it is essential to have $u^{r} > 0$ for vorticity growth. 
	
	This is precisely the setup that Childress and his collaborators investigated in detail (\cite{Child07,Child08,ChilGil1,ChilGil2}). In \cite[Theorem 1]{Child07}, Childress proves the upper bound\footnote{The initial data can be general and not restricted to the form given in \eqref{eq:rings}.} \begin{equation}\label{eq:t2}
		\begin{split}
			\nrm{\omg^{\tht}(t,\cdot)}_{L^{\infty}(\bbR^{3})} \le C(1+t^2),
		\end{split}
	\end{equation}  by solving the following variational optimization problem: at time $t$, assuming that $\omg^{\tht}$ is supported near some radius $R \gg 1$, what is the optimal shape of $\omg^{\tht}$ which maximizes its radial velocity $u^{r}$, under a constraint on the volume of the support of $\omg^{\tht}$ (which is conserved in time)? 
	
	Following \cite{Child07,Child08}, $t^{2}$ can be easily explained as follows: assume that the vorticity at time $t$ is localized near the point $(R(t),0)$ with $L(t)$ as its own length scale. Assuming that $L(t) \ll R(t)$, the velocity on the support of vorticity scales as $R(t)L(t)$: {roughly speaking, this is because the velocity can be obtained by ``integrating'' the vorticity (which has size $\sim R(t)$) over a domain with length scale $\sim L(t)$.} Then the volume conservation imposes the restriction $R(t)L^{2}(t) \lesssim 1$, or $L(t) \lesssim (R(t))^{-1/2}$. Then, from \begin{equation*}
		\begin{split}
			\frac{d}{dt} R(t)  \sim \frac{d}{dt}\nrm{\omg^{\tht}(t,\cdot)}_{L^{\infty}(\bbR^{3})} \lesssim \nrm{u^{r}(t,\cdot)}_{L^{\infty}(\supp( \omg^{\tht}(t,\cdot) ))} \sim R(t)L(t) \lesssim (R(t))^{1/2}, 
		\end{split}
	\end{equation*} we arrive at \eqref{eq:t2}. 
	
	Furthermore, in the same papers, Childress makes an important observation that this scenario contradicts the conservation of kinetic energy, namely $\nrm{u}^{2}_{L^{2}
	}$. Indeed, using that the support volume scales like $R(t)L^{2}(t)$ and the velocity $R(t)L(t)$, we have \begin{equation*}
		\begin{split}
			\nrm{u}^{2}_{L^{2}(\bbR^{3})} \gtrsim R^{3}(t)L^{4}(t), 
		\end{split}
	\end{equation*} and the scaling $L(t) \sim (R(t))^{-1/2}$ clearly makes the right hand side divergent as $R(t)\to\infty$. To fix this, we need to have $L(t) \lesssim (R(t))^{-3/4}$, which gives instead \begin{equation*}
		\begin{split}
			\frac{d}{dt} R(t) \lesssim (R(t))^{1/4}. 
		\end{split}
	\end{equation*} Based on these considerations, Childress conjectured in \cite{Child07,Child08} that the maximal growth rate is given by $t^{4/3}$, which is confirmed in our Theorem \ref{thm:growth-optimal}. Later, Childress--Gilbert--Valiant \cite{ChilGil1} presents numerical experiments which supports that the growth rate of $t^{4/3}$ can be indeed achieved. They also report that the optimal growth is attained by rescaled Sadovskii vortex patch--see \cite{CJS,HT} for more discussion related to this point. 
	
	Finally, we observe that $L(t) \lesssim (R(t))^{-3/4}$ implies that the volume of the dipole support in $\bbR^{3}$, which is $R(t)L^{2}(t) \lesssim (R(t))^{-1/2}$, goes to zero as $R(t)\to\infty.$ Since the total volume of the vorticity support remains constant, it means that most of the volume filaments behind (this is what we have attempted to depict in Figure \ref{fig:dipole}). Hence the terminology ``eroding'' dipole in \cite{Child07,Child08} is appropriate.

	\subsection{Velocity bounds for axisymmetric flows: the proof strategy} 
	Let us now review existing bounds on the velocity  for axisymmetric flows and then discuss our proof strategy. 
	
	\subsubsection{Exponential upper bound} To begin with, it is relatively straightforward to obtain the exponential in time upper bound for the vorticity. This can be done in a number of ways: in \cite{Danaxi}, the following estimate was established: \begin{equation*}
		\begin{split}
			\left\Vert \frac{u^r}{r} \right\Vert_{L^{\infty}(\bbR^{3})} \lesssim 	\left\Vert \frac{\omg^{\tht}}{r} \right\Vert_{L^{3,1}(\bbR^{3})}. 
		\end{split}
	\end{equation*} Using that the right hand side is conserved in time, one obtains the vorticity bound from \eqref{eq:asEuler} \begin{equation*}
		\begin{split}
			\frac{d}{dt}\nrm{\omg^{\tht}(t,\cdot)}_{L^{\infty}(\bbR^{3})} \lesssim 	\left\Vert \frac{u^r(t,\cdot)}{r} \right\Vert_{L^{\infty}(\bbR^{3})} \nrm{\omg^{\tht}(t,\cdot)}_{L^{\infty}(\bbR^{3})} \lesssim  \nrm{\omg^{\tht}(t,\cdot)}_{L^{\infty}(\bbR^{3})} 
		\end{split}
	\end{equation*} which results in an exponential in time upper bound, using Gronwall's inequality. Another estimate (assuming compact support of $\omg^{\tht}$) from Majda--Bertozzi \cite{MB} is $	\nrm{u(t,\cdot)}_{L^{\infty}} \lesssim  \nrm{\omg^{\tht}(t,\cdot)}_{L^{\infty}} $, which just uses the conservation of the volume of the support for $\omg^{\tht}$. This again results in an exponential upper bound for $ \nrm{\omg^{\tht}(t,\cdot)}_{L^{\infty}} $. To the best of our knowledge, before the work of Childress, the exponential bound was still the best one. 
	
	\subsubsection{Feng--Sverak estimate and the $t^{2}$ bound again}  
	
	A very interesting inequality for axisymmetric flows by Feng--Sverak \cite{FeSv} in 2015 gives an alternative, simple proof of the $t^{2}$ upper bound (although the authors had a completely different motivation). Their inequality reads 	\begin{equation}\label{eq:FS}
		\nrm{u }_{L^{\ift}(\bbR^{3})} \lesssim \bigg\|\frac{\omg^{\tht}}{r}\bigg\|_{L^{\ift}(\bbR^{3})}^{1/2}\bigg\|\frac{\omg^{\tht}}{r}\bigg\|_{L^{1}(\bbR^{3})}^{1/4}\nrm{r\omg^{\tht}}_{L^{1}(\bbR^{3})}^{1/4}
	\end{equation} (where $u = (u^r, u^z)$) and given this inequality, one can estimate \begin{equation*}
		\begin{split}
			\nrm{r\omg^{\tht}(t,\cdot)}_{L^{1}(\bbR^{3})}^{1/4} \le (R(t))^{1/2} \bigg\|\frac{\omg^{\tht}(t,\cdot)}{r}\bigg\|_{L^{1}(\bbR^{3})}^{1/4}
		\end{split}
	\end{equation*}and with conservation of $\nrm{r^{-1}\omg^{\tht}}_{L^p}$, one obtains \begin{equation*}
		\begin{split}
			\frac{d}{dt} R(t) \lesssim (R(t))^{1/2} ,
		\end{split}
	\end{equation*} which immediately gives the $t^{2}$ upper bound.\footnote{Let us clarify that the quantity $\nrm{r\omg^{\tht}}_{L^{1}
		}$ is conserved only when $\omg^{\tht}$ is \textit{single-signed}, namely if it is non-negative or non-positive everywhere in $\Pi$. Indeed, for single-signed vorticity, one concludes from \eqref{eq:FS} that the velocity is uniformly bounded in space and time, which gives the linear bound $R(t) \lesssim 1+ t$. Actually, in the single-signed case, the work of Maffei--Marchioro \cite{Maffei2001} in 2001 provides an upper bound of the form $R(t) \lesssim 1 + t^{1/4}\log(e+t)$.}
	
	Actually there is a reason why a sharp inequality on $\nrm{u}_{L^{\infty}}$ just involving $L^{p}$ norms of $\omg^{\tht}$ (possibly with $r$ weights) results in the same answer with Childress' variational proof. This requires an understanding of how the exponents in the right hand side of \eqref{eq:FS} are determined. This is non-trivial since three norms appear, and for the inequality \begin{equation}\label{eq:FS-hyp}
		\begin{split}
			\nrm{u }_{L^{\ift}(\bbR^{3})} \lesssim \bigg\|\frac{\omg^{\tht}}{r}\bigg\|_{L^{\ift}(\bbR^{3})}^{\alp}\bigg\|\frac{\omg^{\tht}}{r}\bigg\|_{L^{1}(\bbR^{3})}^{\bt}\nrm{r\omg^{\tht}}_{L^{1}(\bbR^{3})}^{\gmm}
		\end{split}
	\end{equation} to hold, we clearly need \begin{itemize}
		\item $0 \le \alp, \bt, \gmm$ and $\alp+\bt+\gmm = 1$ (homogeneity)
		\item $2\alp - \bt - 3\gmm = 0$ (3D scaling law). 
	\end{itemize} We need one more relation to determine the exponents, which comes from \textit{two-dimensionalization}: for \eqref{eq:FS-hyp} to hold, it should be true in particular when $\omg^{\tht}$ is sharply concentrated near a single point in $\Pi$, say $(1,0)$. In that case, \eqref{eq:FS-hyp} simply reduces to \begin{equation}\label{eq:FS-hyp-red}
		\begin{split}
			\nrm{u }_{L^{\ift}(\bbR^{3})} \lesssim  \| {\omg^{\tht}}  \|_{L^{\ift}(\bbR^{3})}^{\alp} \nrm{\omg^{\tht}}_{L^{1}(\bbR^{3})}^{{\bt+\gmm}}, 
		\end{split}
	\end{equation} and the punchline is that for such concentrated vortices, the 3D Biot--Savart kernel \eqref{eq:3DBSlaw} is asymptotic to the 2D Biot--Savart law. Therefore, \eqref{eq:FS-hyp-red} must obey the 2D scaling law, which is \begin{itemize}
		\item {$\alp=1/2=\bt+\gmm$}. 
	\end{itemize}
	This last part was indeed the essential observation in the $t^2$ proof of Childress: the velocity scales like $R(t)L(t)$, exactly as in 2D fluids.

	\subsubsection{Global velocity bound using kinetic energy and the $t^{3/2}$ bound} Given that the kinetic energy conservation plays the key role in the $t^{4/3}$ conjecture of Childress, it was very natural for us to attempt to get a Feng--Sverak type inequality which features $\nrm{u}_{L^{2}}$ in the right hand side. After a lot of trial and error, we could finally prove the following inequality (see Proposition \ref{prop:estimate} below) \begin{equation}\label{eq:newest}
		\nrm{u^{r}}_{L^{\ift}(\bbR^{3})} \lesssim \nrm{u}_{L^{2}(\bbR^{3})}^{1/3}\bigg\|\frac{\omg^{\tht}}{r}\bigg\|_{L^{\ift}(\bbR^{3})}^{1/2}\nrm{r\omg^{\tht}}_{L^{1}(\bbR^{3})}^{1/6}
	\end{equation}
	where the exponents on the right hand side can be explained as in the case of \eqref{eq:FS}. Now that the exponent of $\nrm{r\omg^{\tht}}_{L^{1}}$ is lower, it gives a better bound which is $t^{3/2}$. 	Unfortunately, this is still worse than $t^{4/3}$, and we were unable to lower the exponent $1/6$ in \eqref{eq:newest}.\footnote{An elementary algebraic computation shows that $1/6$ is the smallest power of $\nrm{r\omg^{\tht}}_{L^{1}}$ that one can use, under the assumption that on the right hand side, one is using $\nrm{r\omg^{\tht}}_{L^{1}}$ together with  $\nrm{u}_{L^2}$ and $\nrm{r^{-1}\omg^{\tht}}_{L^p}$ for all possible values of $p$.} 
	
	{Before we proceed to explaining the proof for $t^{4/3}$ bound, we note that the $t^{3/2}$ upper bound is the best we can obtain \textit{without} the compact support assumption for the vorticity. We still need some decay, which is $r\omg_{0}^{\tht} \in L^{1}(\bbR^{3})$. 
		 \begin{thm}\label{thm:t3/2}
			Assume that $\omg_{0}^{\tht}$ and $ r^{-1}\omg_{0}^{\tht} $ are in $(L^{1}\cap L^{\ift})(\bbR^{3})$ {and that $r\omg_{0}^{\tht}$ is in $L^{1}(\bbR^{3})$.} Then the corresponding unique global-in-time solution $\omg(t,\cdot)$ of \eqref{eq:Vorticityform} belonging to $L_{loc}^{\ift}(\bbR;(L^{1}\cap L^{\ift})(\bbR^{3}))$ with the initial data $\omg_{0}=\omg_{0}^{\tht}e^{\tht}$ satisfies {$r\omg \in L_{loc}^{\ift}(\bbR;L^{1}(\bbR^{3}))$ and}, for some constant $A(\omg_{0}^{\tht})>0$ depending only on $\omg_{0}^{\tht}$,
			\begin{equation}\label{eq:t3/2est}
				\nrm{\omg(t,\cdot)}_{L^{\ift}(\bbR^{3})}\leq A(\omg_{0}^{\tht})(1+|t|)^{3/2}\quad\text{for all}\quad t\in\bbR.
			\end{equation}
	\end{thm}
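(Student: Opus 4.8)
The plan is to run a bootstrap on the single scalar quantity $M(t):=\nrm{r\omg^{\tht}(t,\cdot)}_{L^{1}(\bbR^{3})}$, feed it into the velocity bound \eqref{eq:newest}, and then recover the vorticity maximum by a Lagrangian argument. Throughout I treat $t\ge 0$; the case $t<0$ follows from the time-reversal symmetry $(u,t)\mapsto(-u,-t)$ of \eqref{eq:Vorticityform}. I freely use the conserved quantities: the kinetic energy $\nrm{u(t,\cdot)}_{L^{2}}=\nrm{u_{0}}_{L^{2}}$ and, since $g:=\omg^{\tht}/r$ solves the pure transport equation \eqref{eq:asEuler-rel} along the volume-preserving flow $\Phi_{t}$, all the norms $\nrm{r^{-1}\omg^{\tht}(t,\cdot)}_{L^{p}(\bbR^{3})}=\nrm{r^{-1}\omg_{0}^{\tht}}_{L^{p}(\bbR^{3})}$ for $p\in[1,\infty]$.

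First step, the evolution of $M(t)$. Writing $M(t)=\int_{\bbR^{3}}r^{2}\,|g|\,dx$ and pulling the integral back to $t=0$ along $\Phi_{t}$ (on which $|g|$ is constant), differentiation in time gives $\frac{d}{dt}M(t)=2\int_{\bbR^{3}}r\,u^{r}\,|g|\,dx=2\int_{\bbR^{3}}u^{r}\,|\omg^{\tht}|\,dx\le 2\nrm{u^{r}}_{L^{\infty}}\nrm{\omg^{\tht}}_{L^{1}}$, using $r|g|=|\omg^{\tht}|$. The norm $\nrm{\omg^{\tht}}_{L^{1}}=\int r|g|\,dx$ is not conserved, but Cauchy--Schwarz against the conserved weight interpolates it between $M$ and a conserved quantity: $\nrm{\omg^{\tht}}_{L^{1}}\le M^{1/2}\nrm{r^{-1}\omg_{0}^{\tht}}_{L^{1}}^{1/2}$. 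Inserting \eqref{eq:newest}, in which $\nrm{u}_{L^{2}}$ and $\nrm{r^{-1}\omg^{\tht}}_{L^{\infty}}$ are constants and $\nrm{r\omg^{\tht}}_{L^{1}}=M$, yields $\nrm{u^{r}}_{L^{\infty}}\le C\,M^{1/6}$, so that $\frac{d}{dt}M\le C'\,M^{1/6}\cdot M^{1/2}=C'\,M^{2/3}$. Comparison for this ODE gives $M(t)\le\bigl(M(0)^{1/3}+\tfrac{C'}{3}t\bigr)^{3}\lesssim (1+t)^{3}$, in particular $r\omg\in L^{\infty}_{loc}(\bbR;L^{1})$, and then $\nrm{u^{r}(t,\cdot)}_{L^{\infty}}\lesssim M(t)^{1/6}\lesssim(1+t)^{1/2}$.

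Second step, the vorticity maximum. Since $\omg^{\tht}=rg$ and $g$ is transported, a particle with initial radius $r_{0}$ and radius $r(t)$ at time $t$ satisfies $\omg^{\tht}(t,\Phi_{t}(x_{0}))=\tfrac{r(t)}{r_{0}}\omg_{0}^{\tht}(x_{0})$. As $\tfrac{d}{dt}r(t)=u^{r}\le\nrm{u^{r}}_{L^{\infty}}$, we obtain $r(t)\le r_{0}+I(t)$ with $I(t):=\int_{0}^{t}\nrm{u^{r}(s,\cdot)}_{L^{\infty}}\,ds\lesssim(1+t)^{3/2}$. Hence $\tfrac{r(t)}{r_{0}}|\omg_{0}^{\tht}(x_{0})|\le|\omg_{0}^{\tht}(x_{0})|+I(t)\,\tfrac{|\omg_{0}^{\tht}(x_{0})|}{r_{0}}$, and taking the supremum over $x_{0}$, which equals $\nrm{\omg^{\tht}(t,\cdot)}_{L^{\infty}}$ because $\Phi_{t}$ is a bijection, gives $\nrm{\omg^{\tht}(t,\cdot)}_{L^{\infty}}\le\nrm{\omg_{0}^{\tht}}_{L^{\infty}}+I(t)\,\nrm{r^{-1}\omg_{0}^{\tht}}_{L^{\infty}}\lesssim(1+t)^{3/2}$, which is \eqref{eq:t3/2est}. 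The decay hypothesis $r\omg_{0}^{\tht}\in L^{1}$ enters exactly to make $M(0)$ finite and to start the loop, while the hypotheses $\omg_{0}^{\tht},r^{-1}\omg_{0}^{\tht}\in L^{1}\cap L^{\infty}$ supply the constants $A(\omg_{0}^{\tht})$.

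I expect the main obstacle to be rigor rather than the scheme: justifying the Lagrangian differentiation of $M(t)$ and the representation of $\nrm{\omg^{\tht}}_{L^{\infty}}$ along characteristics under the weak hypotheses (no compact support, only $L^{1}\cap L^{\infty}$ regularity), where $|g|$ is merely a renormalized transported quantity and one must control integrability at spatial infinity and near the zero set of $g$. The clean way is to first prove the estimate for smooth, compactly supported, suitably truncated data, for which the flow map and all manipulations are classical, observe that every constant appearing depends only on $\nrm{u_{0}}_{L^{2}}$, $\nrm{r^{-1}\omg_{0}^{\tht}}_{L^{1}}$, $\nrm{r^{-1}\omg_{0}^{\tht}}_{L^{\infty}}$, $\nrm{\omg_{0}^{\tht}}_{L^{\infty}}$ and $\nrm{r\omg_{0}^{\tht}}_{L^{1}}$, and then pass to the limit along an approximating sequence using well-posedness in $L^{\infty}_{loc}(\bbR;L^{1}\cap L^{\infty})$ together with stability of the conserved quantities. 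A secondary point is to ensure $M(t)$ cannot blow up in finite time, which the a priori differential inequality $\frac{d}{dt}M\lesssim M^{2/3}$ furnishes via a standard continuation argument.
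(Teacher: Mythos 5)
Your proposal is correct, and it reaches \eqref{eq:t3/2est} by a route that genuinely differs from the paper's in the key bookkeeping step, though the architecture is shared: both arguments rest on Proposition \ref{prop:estimate} (inequality \eqref{eq:newest}), the conserved quantities $\nrm{u}_{L^{2}}$ and $\nrm{r^{-1}\omg}_{L^{p}}$, and a Lagrangian estimate along the flow map. The difference is in how $\nrm{r\omg(t,\cdot)}_{L^{1}}$ is controlled. The paper's master quantity is the length function $L(t)=1+\int_{0}^{t}\nrm{u^{r}(s,\cdot)}_{L^{\ift}}\,ds$ (your $I(t)$ is exactly $L(t)-1$), and it proves the purely \emph{kinematic} bounds \eqref{eq:claim1}--\eqref{eq:claim2}, namely $\nrm{r\omg(t)}_{L^{1}}\lesssim_{\omg_{0}}L(t)^{2}$ and $\nrm{\omg(t)}_{L^{\ift}}\lesssim_{\omg_{0}}L(t)$, by splitting $\bbR^{3}$ into $\{r_{x}\le L(t)\}$ and its complement and using the ratio bound $\Phi_{t}^{r}(y)/r_{y}\le L(t)$ on $\{r_{y}>1\}$; feeding the first claim into \eqref{eq:newest} closes the ODE $L'\lesssim L^{1/3}$, hence $L\lesssim(1+t)^{3/2}$. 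You instead differentiate $M(t)=\nrm{r\omg(t)}_{L^{1}}$ directly, obtaining the exact identity $\frac{d}{dt}M=2\int u^{r}|\omg|\,dx$ (the same quantity whose monotonicity is exploited in \cite{CJ-axi,GMT2023} for the dipole data), interpolate $\nrm{\omg}_{L^{1}}\le M^{1/2}\nrm{r^{-1}\omg_{0}}_{L^{1}}^{1/2}$ by Cauchy--Schwarz against the conserved weight, and close the autonomous ODE $M'\lesssim M^{2/3}$, giving $M\lesssim(1+t)^{3}$ --- the same power the paper obtains for $L(t)^{2}$, and the same intermediate bound $\nrm{u^{r}(t)}_{L^{\ift}}\lesssim(1+t)^{1/2}$. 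Your second step, $\nrm{\omg(t)}_{L^{\ift}}\le\nrm{\omg_{0}}_{L^{\ift}}+I(t)\nrm{r^{-1}\omg_{0}}_{L^{\ift}}$ via $r(t)\le r_{0}+I(t)$ and splitting the product, is a streamlined version of the paper's Claim \eqref{eq:claim2} that avoids its case analysis over $\Omg(t)$ versus $\Omg(t)^{c}$. What each route buys: yours is shorter, rests on an exact differential identity, and needs no normalization tricks; the paper's kinematic claims avoid differentiating a weighted integral in time, which is precisely the regularity-sensitive point you flag yourself --- under the hypotheses $\omg_{0},\,r^{-1}\omg_{0}\in(L^{1}\cap L^{\ift})(\bbR^{3})$ only the flow map is needed (available in this class since $u$ is log-Lipschitz), with no approximation or continuation argument required for the a priori finiteness of $M(t)$. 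Your proposed remedy --- prove the estimate for smooth truncated data, observe that every constant depends only on $\nrm{u_{0}}_{L^{2}}$, $\nrm{r^{-1}\omg_{0}}_{L^{1}\cap L^{\ift}}$, $\nrm{\omg_{0}}_{L^{\ift}}$ and $\nrm{r\omg_{0}}_{L^{1}}$, then pass to the limit using uniqueness and stability in $L^{\ift}_{loc}(\bbR;(L^{1}\cap L^{\ift})(\bbR^{3}))$ --- is standard and closes this gap, so your proof is complete modulo that routine step.
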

	We shall prove this result in Appendix \ref{sec:growth3/2} after the proof of Proposition \ref{prop:estimate}.
	}

	\subsubsection{Strategy and heuristic for the $t^{4/3}$ bound} At a heuristic level, it is easy to see why \eqref{eq:newest} does not give the optimal upper bound: in the eroding dipole model of Childress, the quantity $\nrm{r\omg^{\tht}}_{L^{1}}$ measured \textit{locally} on the leading dipole should decay in time due to filamentation. The question was how to make this observation precise and obtain the $t^{4/3}$ bound. 
	
	To begin with, while the inequality \eqref{eq:newest} is interesting in its own, it is wasteful in the sense that for the growth of the vorticity maximum, we only need to obtain a bound on $u^{r}(t,\cdot)$ at the maximal support radius of $\omg^{\tht}(t,\cdot)$, which we denote by $R(t)$. 
	
	The support of vorticity is contained in the ``rectangle'' $(r,z) \in [0,R(t)] \times \bbR$, and it turns out to be useful to divide the rectangle into two regions: \textit{axial regime} ($(I_1)$ in Figure \ref{fig:region}) and \textit{two-dimensional regime} ($(I_{21}) \cup (I_{22})$ in Figure \ref{fig:region}). 
	
	First, estimating the contribution to $u^{r}$ coming from the axial regime is easier for two reasons: this region is relatively far away from the point $(R(t),0)$ and the axisymmetric Biot--Savart kernel enjoys enhanced decay relative to the two-dimensional one (see Lemma \ref{lem:calFest}). Furthermore, we have bounds on $\nrm{r^{-1}\omg^{\tht}}_{L^p}$-norms, which says that the vorticity is small for $r$ small. Based on these, we were able to bound the contribution to $\nrm{u^r}_{L^\infty}$ coming from the axial regime simply by \begin{equation*}
		\begin{split}
			\bigg\|\frac{\omg^{\tht}}{r}\bigg\|_{L^{\ift}(\bbR^{3})}^{1/3}  \bigg\|\frac{\omg^{\tht}}{r}\bigg\|_{L^{1}(\bbR^{3})}^{2/3}
		\end{split}
	\end{equation*} which is conserved in time: this regime cannot contribute to any superlinear growth of $R(t)$.
	
	Therefore, the key was to understand the two-dimensional regime in detail. For this purpose, it is actually very helpful to consider the two-dimensional Euler equations. Denoting $\omg_{2D}:\bbR^2\to\bbR$ as the scalar vorticity on the plane and $u_{2D}$ the corresponding velocity given by \begin{equation*}
		\begin{split}
			u_{2D}(x) = \frac{1}{2\pi} \iint_{\bbR^{2}} \frac{(x-y)^\perp}{|x-y|^{2}} \omg_{2D}(y) dy, \qquad x \in \bbR^{2}, 
		\end{split}
	\end{equation*} as a warm up one can actually prove the following: \begin{equation*}
		\begin{split}
			\nrm{ u_{2D} }_{L^{\infty}(\bbR^2)} \lesssim 	\nrm{ u_{2D} }_{L^{2}(\bbR^2)}^{1/2} \nrm{\omg_{2D} }_{L^{\infty}(\bbR^2)}^{1/2}. 
		\end{split}
	\end{equation*} Believing in this inequality and two-dimensionalization of the axisymmetric Biot--Savart kernel, one may expect to have something like \begin{equation*}
		\begin{split}
			\nrm{ u^{r} }_{L^{\infty}(U)} \lesssim 	\nrm{ u }_{L^{2}(U)}^{1/2} \nrm{\omg^{\tht} }_{L^{\infty}(U)}^{1/2}
		\end{split}
	\end{equation*} where $U \subset \Pi $ is a small ball centered at $(R(t),0)$. Now to turn the $L^{2}(U)$ norm into the $L^{2}$ norm measured in $\bbR^{3}$, we need to incorporate the metric factor: \begin{equation*}
		\begin{split}
			\nrm{ u }_{L^{2}(U)}^{1/2} \sim	R^{-1/4}\nrm{ r^{1/2} u }_{L^{2}(U)}^{1/2} \lesssim 	R^{-1/4}\nrm{   u }_{L^{2}(\bbR^{3})}^{1/2} . 
		\end{split}
	\end{equation*} Moreover, noting that $\nrm{\omg^{\tht} }_{L^{\infty}(U)}^{1/2} \sim R^{1/2} \nrm{r^{-1}\omg^{\tht} }_{L^{\infty}(U)}^{1/2}  \lesssim R^{1/2}\nrm{r^{-1}\omg^{\tht} }_{L^{\infty}(\bbR^{3})}^{1/2}$, one obtains \begin{equation*}
		\begin{split}
			\nrm{ u }_{L^{2}(U)}^{1/2} \nrm{\omg^{\tht} }_{L^{\infty}(U)}^{1/2} \lesssim R^{1/4} \nrm{   u }_{L^{2}(\bbR^{3})}^{1/2}  \nrm{r^{-1}\omg^{\tht} }_{L^{\infty}(\bbR^{3})}^{1/2},
		\end{split}
	\end{equation*} which says that the contribution coming from the two-dimensional regime is bounded by $R^{1/4}$.

	Therefore, if one could justify all these heuristic discussions, one would obtain the inequality \begin{equation*}
		\begin{split}
			\frac{d}{dt} R(t) \sim \sup_{z\in\bbR} \, u^{r}(t,R(t),z) \lesssim 1 + (R(t))^{1/4} ,
		\end{split}
	\end{equation*} which finally gives the $t^{4/3}$ bound. The formal statement and proof for this key inequality are given in \S \ref{subsec:key} below. 
	
	\subsection{Growth rates in higher dimensions and related models}\label{subsec:highD} 
	
	Our new bounds on the axisymmetric velocity can be readily extended to the case of higher dimensional axisymmetric Euler equations, which were studied for instance in \cite{Yang,KhYa,CJLglobal22,Limglobal23,GMT2023}.  
	
	One can compute that the $d$-dimensional Euler equations under the assumption of axisymmetric without swirl take the form  
	\begin{equation}\label{eq:asEulerhighd}
		\begin{split}
			\rd_{t}\omg + (u^r \rd_{r} + u^{z} \rd_{z})\omg &=\frac{(d-2)u^{r}}{r}\omg 
		\end{split}
	\end{equation} with a scalar vorticity $\omg$ for any $d\ge3$. 
	\begin{thm}\label{thm:growth-optimalhighd} Assume that $\omg_{0}$ is compactly supported and $\nrm{ r^{-(d-2)} \omg_{0} }_{L^\infty(\bbR^{d})}$ is bounded. 
		Then, for $3 \le d \le 6$, the corresponding local-in-time unique solution of \eqref{eq:asEulerhighd} is actually {global-in-time} and  satisfies
		\begin{equation}\label{eq:growth4/3highd}
			\nrm{\omg(t,\cdot )}_{L^{\ift}(\bbR^{d})} \le \begin{cases}
				A_{1}(1+|t|)^{4/(6-d)},&\quad d=3,4,5,\\
				A_{2}e^{A_{3}|t|},&\quad d=6,
			\end{cases}\quad\text{for all}\quad t\in\bbR 
		\end{equation} for some constants $ A_{1}(d,\omg_{0}), A_{2}(\omg_{0}), A_{3}(\omg_{0})>0$. 
	\end{thm}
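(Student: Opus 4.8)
The plan is to transplant the three-dimensional argument of \S\ref{subsec:key} to $\bbR^d$, tracking the maximal support radius $R(t):=\sup\{\,r:(r,z)\in\supp\,\omg(t,\cdot)\,\}$ and converting the control of $R(t)$ into a vorticity bound only at the end. First I would record the two conservation laws. Writing $q:=r^{-(d-2)}\omg$, a direct computation shows that \eqref{eq:asEulerhighd} is equivalent to the pure transport equation $\rd_t q+(u^r\rd_r+u^z\rd_z)q=0$, the exact $d$-dimensional analogue of \eqref{eq:asEuler-rel}; since the flow is volume preserving on $\bbR^d$ this yields conservation of $\nrm{r^{-(d-2)}\omg}_{L^p(\bbR^d)}$ for every $p\in[1,\ift]$ (including conservation of the support volume) and, crucially, the pointwise bound $\nrm{\omg(t,\cdot)}_{L^\ift}\le R(t)^{d-2}\nrm{r^{-(d-2)}\omg_0}_{L^\ift}$. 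Second, the kinetic energy $\nrm{u(t,\cdot)}_{L^2(\bbR^d)}=\nrm{u_0}_{L^2}$ is conserved and finite, the latter because the axisymmetric Biot--Savart kernel decays fast enough in $\bbR^d$ for $d\ge3$.

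The heart of the matter is the $d$-dimensional analogue of the key velocity estimate, namely
\[
\sup_{z\in\bbR}\,u^r(t,R(t),z)\;\lesssim\;1+R(t)^{(d-2)/4},
\]
with implied constant depending only on the conserved quantities above. I would prove it through the same decomposition of the support into the axial regime $(I_1)$ and the two-dimensional regime $(I_{21})\cup(I_{22})$ around the leading point $(R(t),0)$. For the axial contribution I would combine the enhanced far-field decay of the $d$-dimensional axisymmetric kernel (the analogue of Lemma~\ref{lem:calFest}) with the conserved norms $\nrm{r^{-(d-2)}\omg}_{L^1}$ and $\nrm{r^{-(d-2)}\omg}_{L^\ift}$ to bound this piece by a \emph{time-independent} constant, so that it cannot drive superlinear growth. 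For the two-dimensional regime I would make rigorous the two-dimensionalization sketched in the introduction: on a ball $U$ of radius $\ll R$ about $(R,0)$ the weight $r^{d-2}\approx R^{d-2}$ is essentially frozen, the elliptic operator relating the stream function to $\omg$ reduces to the flat Laplacian, and the planar inequality $\nrm{u_{2D}}_{L^\ift}\lesssim\nrm{u_{2D}}_{L^2}^{1/2}\nrm{\omg_{2D}}_{L^\ift}^{1/2}$ applies to $(u,\omg)$ on $U$. Inserting the metric factors $\nrm{u}_{L^2(U)}\sim R^{-(d-2)/2}\nrm{u}_{L^2(\bbR^d)}$ and $\nrm{\omg}_{L^\ift(U)}\sim R^{d-2}\nrm{r^{-(d-2)}\omg}_{L^\ift}$ then produces precisely the exponent $(d-2)/4$.

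Granting this estimate, the outermost particle satisfies $\frac{d}{dt}R(t)\le\sup_{z}u^r(t,R(t),z)\lesssim 1+R(t)^{(d-2)/4}$, and the entire dichotomy is read off from this scalar ODE. The exponent $(d-2)/4$ is strictly below $1$ exactly when $d\le5$, equals $1$ at $d=6$, and exceeds $1$ for $d\ge7$; integrating gives $R(t)\lesssim(1+|t|)^{4/(6-d)}$ for $3\le d\le5$ and $R(t)\lesssim e^{c|t|}$ for $d=6$, whereas for $d\ge7$ the comparison ODE yields only finite-time control, which is exactly why the theorem stops at $d=6$. Feeding this into $\nrm{\omg(t,\cdot)}_{L^\ift}\le R(t)^{d-2}\nrm{r^{-(d-2)}\omg_0}_{L^\ift}$ converts the control of the support radius into a vorticity bound of the type \eqref{eq:growth4/3highd} with the same polynomial-versus-exponential split, and the resulting global-in-time a priori control of $\nrm{\omg}_{L^\ift}$ upgrades the local solution to a global one through the standard continuation criterion.

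I expect the main obstacle to lie in the rigorous two-dimensionalization in $\bbR^d$: one must write down the $d$-dimensional axisymmetric Biot--Savart kernel explicitly, quantify its decay sharply enough to close the axial regime \emph{uniformly in time}, and control the error made in replacing the exact relation $r^{d-2}\omg=\partial_r^2\psi-\frac{d-2}{r}\partial_r\psi+\partial_z^2\psi$ by the flat Laplacian on $U$, so that the energy enters with the precise power $1/2$ and the radius with the precise power $(d-2)/4$. It is this exponent bookkeeping that pins the critical dimension to $d=6$.
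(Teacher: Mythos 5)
Your overall route coincides with the paper's own proof: the same conservation laws ($\nrm{r^{-(d-2)}\omg}_{L^{p}}$ for all $p$ and the kinetic energy), the same key estimate
$\sup_{z}|u^{r}(R,z)|\lesssim \nrm{\omg/r^{d-2}}_{L^{\ift}}^{1/d}\nrm{\omg/r^{d-2}}_{L^{1}}^{1-1/d}+R^{d/4-1/2}\nrm{u}_{L^{2}}^{1/2}\nrm{\omg/r^{d-2}}_{L^{\ift}}^{1/2}$
(note $d/4-1/2=(d-2)/4$, exactly your exponent), and the same scalar ODE for $R(t)$ with the dichotomy at $d=6$. One caveat on the key estimate: the paper does not ``apply the planar inequality on $U$''; there is no local version of $\nrm{u_{2D}}_{L^{\ift}}\lesssim\nrm{u_{2D}}_{L^{2}}^{1/2}\nrm{\omg_{2D}}_{L^{\ift}}^{1/2}$ on a ball that ignores the nonlocal contribution, and the rigorous substitute is precisely the cutoff-plus-integration-by-parts argument of the three-dimensional proof (the terms $(I_{211}')$ through $(I_{216}')$), transplanted to $\bbR^{d}$ with the kernel bounds of Lemma \ref{lem:calFdl}. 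Since you explicitly plan to reuse that decomposition, this part of your outline is sound, but the quoted phrase cannot be taken literally as a proof step.

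The genuine issue is the final conversion from $R(t)$ to $\nrm{\omg(t,\cdot)}_{L^{\ift}}$. Your pointwise bound $\nrm{\omg(t,\cdot)}_{L^{\ift}}\le\nrm{r^{-(d-2)}\omg_{0}}_{L^{\ift}}\,R(t)^{d-2}$ is the correct one, but combined with $R(t)\lesssim(1+|t|)^{4/(6-d)}$ it yields $\nrm{\omg(t,\cdot)}_{L^{\ift}}\lesssim(1+|t|)^{4(d-2)/(6-d)}$, which agrees with \eqref{eq:growth4/3highd} only for $d=3$ (where $d-2=1$) and $d=6$ (where both bounds are exponential). For $d=4$ you obtain $(1+|t|)^{4}$ against the claimed $(1+|t|)^{2}$, and for $d=5$ you obtain $(1+|t|)^{12}$ against the claimed $(1+|t|)^{4}$; so your proposal, as written, does not establish the stated theorem for $d=4,5$, and the phrase ``a vorticity bound of the type \eqref{eq:growth4/3highd}'' conceals a loss of the factor $d-2$ in the exponent. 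You should know that the paper's proof writes, at this exact point, $\nrm{\omg(t,\cdot)}_{L^{\ift}}\le\nrm{\omg(t,\cdot)/r^{d-2}}_{L^{\ift}}\,R(t)$, i.e.\ with the first power of $R(t)$; that inequality reproduces the stated exponent but is not valid for $d>3$ once $R(t)>1$ (a vortex with $r^{-(d-2)}\omg$ of size one near $r=R$ has $\nrm{\omg}_{L^{\ift}}\sim R^{d-2}$). So the mismatch is not a defect of your argument alone: either the exponent in \eqref{eq:growth4/3highd} should read $4(d-2)/(6-d)$ for $d=4,5$, or an additional idea, absent from both your proposal and the paper, is needed to control the radius at which the vorticity maximum is attained. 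In any case, state the exponent your argument actually produces rather than asserting it matches the theorem.
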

	
	This seems to be the first global regularity result for dimensions 5 and 6, without a sign assumption on the vorticity. With the single sign assumption, global regularity was proved in 	\cite{Limglobal23} for any $d\ge3$. Without the sign assumption, global regularity for $d = 4$ was obtained in  \cite{CJLglobal22}, under the assumption $\nrm{r^{-2}\omg_{0}}_{L^\infty}<\infty$.\footnote{Note that unfortunately this assumption is not automatically satisfied by simply assuming sufficient smoothness on the vorticity. The situation is the same for higher dimensions.} However, in  \cite{CJLglobal22}, the upper bound on $	\nrm{\omg(t,\cdot )}_{L^{\ift}
	} $ was exponential in time.

	Lastly, we expect our new estimates to be useful for some other models, including the lake equations (\cite{LOT96,AlLa}), bi-rotational Euler equations (\cite{CJL_gwpbi}) and high dimensional axisymmetric Navier--Stokes equations. 
	
	\subsection{Organization of the paper} The rest of this paper is organized as follows. In \S\ref{sec:prelim}, we first collect notations and recall estimates for the axisymmetric Biot--Savart kernel. The main results are proved in \S \ref{sec:proof}. In \S \ref{subsec:key}, we prove the key $R^{1/4}$ estimate on the velocity, which allows us to conclude Theorem \ref{thm:growth-optimal} in \S \ref{sec:main}. Theorem \ref{thm:growth-optimalhighd} which concerns higher dimensional case is proved in \S \ref{sec:mainhighd}. In the appendix, we include the proof of the global velocity estimate \eqref{eq:newest}, which could be of its own interest.

	\subsection*{Acknowledgments}
	I.-J. Jeong was supported by Samsung Science and Technology Foundation under Project Number SSTF-BA2002-04. D. Lim was supported by the National Research Foundation of Korea grant RS-2024-00350427. We sincerely thank the anonymous referee for the careful reading of our manuscript and for their valuable comments, and in particular for encouraging us to make Theorem \ref{thm:t3/2} into a separate statement. We also thank Kwan Woo for finding a few typos in the earlier version.

	\medskip

	\section{Preliminaries}\label{sec:prelim}

	\subsection{Notations}\label{sec:not} We collect the notations used throughout the paper. 
	\begin{itemize}
		
		\item It is convenient to introduce the half-plane $\Pi := \left\{ (r, z) : r \ge 0, z \in \bbR \right\}$, as $\omg^{\tht}$, $u^{r}$, $u^{z}$ are functions defined on $\Pi$. 
		
		\item Let us write $\omg$ instead of $\omg^{\tht}$ from now on. 
		
		\item We use the letter $R$ to denote the maximal support radius of $\omg$, namely $R$ is the smallest non-negative number satisfying $\omg(r,z)=0$ whenever $r>R$. We write $R(t)$ when $\omg$ is time dependent.
		
		\item We write ${D_{r,z}(\br{r},\br{z})=}D(r,z,\bar{r},\bar{z}) 
		:= ((r-\br{r})^{2}+(z-\br{z})^{2})^{1/2}$ as the distance between the points $(r,z)$ and $(\bar{r},\bar{z})$ in $\Pi$. 
		
		{\item For any $k>0$ and any point $(r,z)\in \Pi$, we denote the disc centered at the point $ (r,z) $ with radius $ k $ as
			$$ B_{k}(r,z):=
			\lbrace(\br{r},\br{z})\in\Pi : D(r,z,\br{r},\br{z})<k\rbrace. $$
			\item We denote $ \chi\in C_{c}^{\ift}([0,\ift)
			;[0,1]) $ as a non-increasing function that satisfies $ \chi\equiv1 $ on $ [0,1/2] $ and $ \chi\equiv0 $ on $ (1,\ift) $. }
		
		\item For non-negative expressions $A$ and $B$, we write $A \lesssim B$ if there is an absolute constant $C>0$ such that $A \le CB$. We write $A \sim B$ if $A \lesssim B$ and $B \lesssim A$. 
		
	\end{itemize}

	\subsection{Axisymmetric Biot--Savart law }\label{subsec:BS} We recall a very convenient form of the axisymmetric Biot--Savart law derived in Feng--Sverak \cite{FeSv}. To begin with, we introduce the elliptic integral \begin{equation}\label{eq:calF}
		\begin{split}
			\calF(s):=\int_{0}^{\pi}\frac{\cos\alp}{[2(1-\cos\alp)+s]^{1/2}}d\alp,\quad s>0.
		\end{split}
	\end{equation} Then, under axisymmetry, 
	\eqref{eq:3DBSlaw} can be represented in the form 
	\begin{equation}\label{eq:asBiotSavart_ur}
		\begin{split}
			u^{r}(r,z) = \iint_{\Pi}F^{r}(r,z,\br{r},\br{z})\omg^{\tht}(\br{r},\br{z})d\br{z}d\br{r}, \qquad F^{r}(r,z,\br{r},\br{z})=  \frac{-(z-\br{z})}{\pi r^{3/2}\br{r}^{1/2}}\calF'\bigg(\frac{D^{2}}{r\br{r}}\bigg) 
		\end{split}
	\end{equation} 
	where we recall from \S \ref{sec:not} that  $\Pi=\lbrace(r,z) 
	: r\geq0\rbrace$ and $D(r,z,\bar{r},\bar{z}) = ((r-\br{r})^{2}+(z-\br{z})^{2})^{1/2}$. Similarly, one can write \begin{equation*}
		\begin{split}
			u^{z}(r,z)=  \iint_{\Pi}F^{z}(r,z,\br{r},\br{z})\omg^{\tht}(\br{r},\br{z})d\br{z}d\br{r}
		\end{split}
	\end{equation*} where \begin{equation*}
		\begin{split}
			&F^{z}(r,z,\br{r},\br{z})=\frac{r-\br{r}}{\pi r^{3/2}\br{r}^{1/2}}\calF'\bigg(\frac{D^{2}}{r\br{r}}\bigg)  + \frac{\br{r}^{1/2}}{4\pi r^{3/2}}\bigg[\calF\bigg(\frac{D^{2}}{r\br{r}}\bigg)- \frac{2 D^{2}}{r\br{r}}\calF'\bigg(\frac{D^{2}}{r\br{r}}\bigg)\bigg]. 
		\end{split}
	\end{equation*} 
	Later, we shall obtain bounds on the kernel $F^{r}$ and its derivatives by the following result.
	\begin{lem}[{{\cite[Corollary 2.9]{FeSv}}}]\label{lem:calFest}
		For any integer $\ell \ge 1$, the $ \ell $-th derivative of $ \calF $ satisfies
		\begin{align} 
			|\calF^{(\ell)}(s)|&\lesssim_{\ell}\min\lbrace s^{-\ell}, s^{-(\ell+3/2)}\rbrace .\label{eq:calFkest}
		\end{align}
	\end{lem}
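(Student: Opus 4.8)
The plan is to differentiate $\calF$ under the integral sign, extract an exact integral representation for $\calF^{(\ell)}$, and then estimate it separately in the two regimes $s\ge 1$ and $s\le 1$, whose combination yields the stated minimum. Writing $w(\alp)=2(1-\cos\alp)+s=4\sin^{2}(\alp/2)+s$, differentiation under the integral is justified for $s$ in compact subsets of $(0,\ift)$ since $w$ is bounded below there, and since $\rd_{s}w\equiv 1$ one computes
\begin{equation*}
\calF^{(\ell)}(s)=c_{\ell}\int_{0}^{\pi}\frac{\cos\alp}{[\,2(1-\cos\alp)+s\,]^{\ell+1/2}}\,d\alp,\qquad c_{\ell}=(-1)^{\ell}\frac{(2\ell-1)!!}{2^{\ell}}.
\end{equation*}

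The key step is to exploit the cancellation carried by $\cos\alp$ (note $\int_{0}^{\pi}\cos\alp\,d\alp=0$) through a single integration by parts. Using $\cos\alp=\frac{d}{d\alp}\sin\alp$, the fact that $\sin\alp$ vanishes at both endpoints, and $\frac{d}{d\alp}w=2\sin\alp$, the boundary terms drop out and I obtain the representation with nonnegative integrand
\begin{equation*}
\calF^{(\ell)}(s)=c_{\ell}(2\ell+1)\int_{0}^{\pi}\frac{\sin^{2}\alp}{[\,4\sin^{2}(\alp/2)+s\,]^{\ell+3/2}}\,d\alp.
\end{equation*}
The extra power of $w$ now sitting in the denominator is exactly what produces the improved large-$s$ decay.

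For $s\ge 1$ I would bound the denominator below by $s^{\ell+3/2}$ and use $\sin^{2}\alp\le 1$, giving $|\calF^{(\ell)}(s)|\lesssim_{\ell}s^{-(\ell+3/2)}$. For $s\le 1$ I would instead cancel one power via $\sin^{2}\alp=4\sin^{2}(\alp/2)\cos^{2}(\alp/2)\le 4\sin^{2}(\alp/2)\le 4\sin^{2}(\alp/2)+s$, reducing matters to $\int_{0}^{\pi}[\,4\sin^{2}(\alp/2)+s\,]^{-(\ell+1/2)}\,d\alp$. The elementary comparison $4\sin^{2}(\alp/2)\ge \frac{4}{\pi^{2}}\alp^{2}$ on $[0,\pi]$ (Jordan's inequality) followed by the rescaling $\alp=\sqrt{s}\,\beta$ turns this into $s^{-\ell}\int_{0}^{\pi/\sqrt{s}}(\beta^{2}+1)^{-(\ell+1/2)}\,d\beta$, which is bounded uniformly in $s$ because $\int_{0}^{\ift}(\beta^{2}+1)^{-(\ell+1/2)}\,d\beta<\ift$ for every $\ell\ge 1$. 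Thus $|\calF^{(\ell)}(s)|\lesssim_{\ell}s^{-\ell}$ for $s\le 1$, and the two regimes together give the claimed $\min\{s^{-\ell},s^{-(\ell+3/2)}\}$.

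The main obstacle is the large-$s$ bound: naively estimating the denominator in the first representation yields only $s^{-(\ell+1/2)}$, which falls a full half-power short of the claim. The entire gain comes from the mean-zero structure of $\cos\alp$, and the integration by parts is the clean device that converts this cancellation into the extra decay; getting that one step right is the crux, after which both estimates are routine.
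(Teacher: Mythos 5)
Your proof is correct, and it is worth noting that the paper itself gives no proof at all: Lemma~\ref{lem:calFest} is simply quoted from Feng--\v{S}ver\'ak \cite[Corollary 2.9]{FeSv}. What you have produced is therefore a self-contained elementary argument for a cited result. Each step checks out: differentiation under the integral sign (legitimate, since $w(\alp)=4\sin^{2}(\alp/2)+s$ is bounded below on compact subsets of $s\in(0,\ift)$ and $\rd_{s}w\equiv1$) gives $\calF^{(\ell)}(s)=c_{\ell}\int_{0}^{\pi}\cos\alp\, w^{-(\ell+1/2)}d\alp$ with $c_{\ell}=(-1)^{\ell}(2\ell-1)!!/2^{\ell}$; the single integration by parts is exact, since $\sin\alp$ vanishes at both endpoints and $\frac{d}{d\alp}w=2\sin\alp$, yielding the sign-definite representation $c_{\ell}(2\ell+1)\int_{0}^{\pi}\sin^{2}\alp\, w^{-(\ell+3/2)}d\alp$; the bound $w\ge s$ then gives $s^{-(\ell+3/2)}$ for $s\ge1$, and the chain $\sin^{2}\alp\le 4\sin^{2}(\alp/2)\le w$, Jordan's inequality, and the rescaling $\alp=\sqrt{s}\,\bt$ give $s^{-\ell}$ for $s\le1$ (the limiting integral $\int_{0}^{\ift}(\bt^{2}+1)^{-(\ell+1/2)}d\bt$ converges precisely because $\ell\ge1$ forces the exponent above $1/2$). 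This is the same cancellation mechanism that underlies the Feng--\v{S}ver\'ak asymptotics in Remark~\ref{rem:F} --- the mean-zero factor $\cos\alp$ kills the leading term of the large-$s$ expansion --- but your route via a single integration by parts is arguably cleaner and has the bonus of showing that $\calF^{(\ell)}$ has the fixed sign $(-1)^{\ell}$. One small slip in your closing commentary: the naive bound $s^{-(\ell+1/2)}$ falls a \emph{full} power of $s$ short of the claimed $s^{-(\ell+3/2)}$, not a half-power; the integration by parts gains $w^{-1}$ in exchange for $\sin^{2}\alp\le1$. This does not affect the proof.
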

	
	\begin{rem}\label{rem:F}
		Although we shall not use this fact, one has $|\calF(s)| \lesssim \min\{{|\log(s)|+1}, s^{-3/2}\}$. More precisely, for $0<s\ll1$, we have that $\calF(s) = c_{0}\log(s) + O(1)$, $\calF'(s) = c_{0}s^{-1} + O(s^{-2})$ and so on, for an absolute constant $c_{0}\ne0$. 
	\end{rem}

	\medskip 
	
	\section{Proof of the main result}\label{sec:proof}
	
	\subsection{The $R^{1/4}$ estimate}\label{subsec:key}
	
	In this section, we prove the key estimate, which bounds the radial velocity \textit{on} the maximal support radius of the vorticity. 
	
	\begin{proposition}\label{prop:urRz}
		Let $\omg$ be compactly supported with maximal radius of the support $R>0$. Then  we have
		\begin{equation}\label{eq:uratRz}
			\sup_{z\in\bbR}|u^{r}(R,z)|\lesssim\bigg\|\frac{\omg}{r}\bigg\|_{L^{\ift}(\bbR^{3})}^{1/3}\bigg\|\frac{\omg}{r}\bigg\|_{L^{1}(\bbR^{3})}^{2/3}+R^{1/4} \nrm{u}_{L^{2}(\bbR^{3})}^{1/2}\bigg\|\frac{\omg}{r}\bigg\|_{L^{\ift}(\bbR^{3})}^{1/2}.
	\end{equation}	\end{proposition}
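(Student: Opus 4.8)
The plan is to start from the Feng--Sverak representation \eqref{eq:asBiotSavart_ur}, evaluated at the outer radius,
\[u^{r}(R,z)=\iint_{\Pi}F^{r}(R,z,\br{r},\br{z})\,\omg(\br{r},\br{z})\,d\br{z}\,d\br{r},\qquad |F^{r}|\lesssim\frac{|z-\br{z}|}{R^{3/2}\br{r}^{1/2}}\Big|\calF'\Big(\tfrac{D^{2}}{R\br{r}}\Big)\Big|,\]
and to split the integral into three pieces using two smooth cutoffs built from $\chi$: a radial cutoff separating the \emph{axial regime} $\{\br{r}\lesssim R/2\}$ from the \emph{two-dimensional regime} $\{\br{r}\gtrsim R/2\}$, and a cutoff of $D(R,z,\br{r},\br{z})/\lambda$ separating, inside the two-dimensional regime, a near field $I_{21}=B_{\lambda}(R,z)$ from a far field $I_{22}$, where $\lambda\le R/2$ is a free scale to be optimized at the end. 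These are the regions $I_{1}$, $I_{21}$, $I_{22}$.

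For the axial piece $I_{1}$ I would use that on $\{\br{r}\le R/2\}$ one has $D\ge R-\br{r}\ge R/2\ge\br{r}$, so $s=D^{2}/(R\br{r})\gtrsim1$ and Lemma~\ref{lem:calFest} supplies the enhanced decay $|\calF'(s)|\lesssim s^{-5/2}$; together with $|z-\br{z}|\le D$ and $\br{r}\le D$ this collapses to $|F^{r}|\lesssim R\br{r}^{2}/D^{4}\lesssim 1/D$. Writing $\omg=\br{r}\,(\omg/r)$ and viewing $\omg/r$ as a three-dimensional density (so that $\nrm{\omg/r}_{L^{1}(\bbR^{3})}$ and $\nrm{\omg/r}_{L^{\infty}(\bbR^{3})}$ appear, and $D\sim|x-y|$ on this regime), the axial contribution is governed by a genuinely three-dimensional kernel of size $1/D$. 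A standard near/far split at scale $\ell$ then yields $\nrm{\omg/r}_{L^{\infty}}\ell^{2}+\nrm{\omg/r}_{L^{1}}\ell^{-1}$, and optimizing in $\ell$ produces the first term $\nrm{\omg/r}_{L^{\infty}}^{1/3}\nrm{\omg/r}_{L^{1}}^{2/3}$; the exponents $1/3,2/3$ are exactly those forced by three-dimensional scaling, and the degenerate case $\ell<R/2$ only improves the bound.

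For the near field $I_{21}$, on $B_{\lambda}(R,z)$ with $\lambda\le R/2$ one has $\br{r}\sim R$ and $s\lesssim1$, so $|\calF'|\lesssim s^{-1}$ and $|F^{r}|\lesssim 1/D$ is the two-dimensional kernel; since $|\omg|=\br{r}\,|\omg/r|\lesssim R\,\nrm{\omg/r}_{L^{\infty}}$ locally, integrating $1/D$ over a disc of radius $\lambda$ gives $R\,\nrm{\omg/r}_{L^{\infty}}\lambda$. The crucial piece is $I_{22}$, where $\nrm{u}_{L^{2}}$ must enter. Here I would exploit the curl structure $\omg=\rd_{\br{r}}u^{z}-\rd_{\br{z}}u^{r}$ and integrate by parts, moving the derivative onto the (smoothly cut-off) kernel, then apply Cauchy--Schwarz after splitting the metric weight as $|u|=\br{r}^{-1/2}\cdot\br{r}^{1/2}|u|$ and using $\nrm{\br{r}^{1/2}u}_{L^{2}(\Pi)}\sim\nrm{u}_{L^{2}(\bbR^{3})}$. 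On $I_{22}$ one has $\br{r}\sim R$ and $|\nabla_{\br{r},\br{z}}F^{r}|\lesssim 1/D^{2}$ (now using both $\calF'$ and $\calF''$ from Lemma~\ref{lem:calFest}), whence $\iint_{I_{22}}\br{r}^{-1}|\nabla F^{r}|^{2}\lesssim R^{-1}\lambda^{-2}$, giving the far-field bound $R^{-1/2}\lambda^{-1}\nrm{u}_{L^{2}(\bbR^{3})}$. Balancing near against far at $\lambda\sim R^{-3/4}(\nrm{u}_{L^{2}}/\nrm{\omg/r}_{L^{\infty}})^{1/2}$ (and taking $\lambda=R/2$ in the degenerate case) produces the second term $R^{1/4}\nrm{u}_{L^{2}}^{1/2}\nrm{\omg/r}_{L^{\infty}}^{1/2}$, and summing the three pieces gives \eqref{eq:uratRz}.

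The main obstacle I expect is making the far-field integration by parts rigorous and sharp. The cut-off kernel is supported on an unbounded set and $u$ need not decay, so the vanishing of the boundary contributions has to be justified from $u\in L^{2}$ together with the $L^{2}$-decay of $F^{r}$ (by Cauchy--Schwarz on large circles, or by a limiting argument). In addition one must verify that the terms in which the derivative falls on the cutoffs rather than on $F^{r}$ are harmless: the transition at $D\sim\lambda$ reproduces the same $R^{-1/2}\lambda^{-1}\nrm{u}_{L^{2}}$ bound, while the radial transition at $\br{r}\sim R/2$ is strictly lower order ($\lesssim R^{-3/2}\nrm{u}_{L^{2}}\lesssim R^{-1/2}\lambda^{-1}\nrm{u}_{L^{2}}$). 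The delicate point throughout is the bookkeeping of the metric factor $\br{r}$, so that the powers of $R$ in each region emerge exactly as in \eqref{eq:uratRz}.
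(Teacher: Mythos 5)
Your proposal follows, in essence, the paper's own strategy: a direct kernel estimate via Lemma \ref{lem:calFest} on the region far from $(R,z)$ producing the conserved term $\nrm{\omg/r}_{L^{\ift}}^{1/3}\nrm{\omg/r}_{L^{1}}^{2/3}$ (your ``3D-density'' reading of the axial piece, with the potential-theoretic split in $\ell$, is a correct variant of the paper's H\"older argument, and your justification $|x-y|\sim D$ on $\{\br{r}\le R/2\}$ is sound); a trivial near-field bound $R\nrm{\omg/r}_{L^{\ift}}\lambda$ on $B_{\lambda}(R,z)$; an integration by parts using $\omg=\rd_{\br{r}}u^{z}-\rd_{\br{z}}u^{r}$ together with Cauchy--Schwarz and the metric weight $\br{r}^{1/2}$, giving $R^{-1/2}\lambda^{-1}\nrm{u}_{L^{2}(\bbR^{3})}$; and the balance $\lambda\sim R^{-3/4}(\nrm{u}_{L^{2}}/\nrm{\omg/r}_{L^{\ift}})^{1/2}$, which is exactly the paper's scale $lR$ with $l$ as in \eqref{eq:l}. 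The one structural difference is the cutoff geometry: the paper cuts with discs $\Phi_{R/2},\Phi_{lR}$ centered at $(R,z)$, so its integration by parts is confined to the \emph{bounded} annulus between radii $lR/2$ and $R/2$, while everything outside $B_{R/4}(R,z)$ --- including the unbounded strip $\{\br{r}\sim R,\ |\br{z}-z|\gtrsim R\}$ --- is absorbed into the direct estimate $(I_{1})$ via the inequality $\br{r}\le R\le 4D$ (the only place compact support at radius $R$ is used). Your radial cutoff at $\br{r}\sim R/2$ instead pushes that strip into the integration-by-parts region; your decay bookkeeping there ($|\nabla F^{r}|\lesssim D^{-2}$ for $D\lesssim R$, with the faster $s^{-(\ell+3/2)}$ decay taking over for $D\gtrsim R$, so the weighted square integral is indeed $\lesssim R^{-1}\lambda^{-2}$) checks out, but you then owe the boundary-at-infinity justification you flag, which the paper's main-text choice avoids entirely (unbounded integration by parts appears only in the appendix proof of Proposition \ref{prop:estimate}).

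There is one genuine, though easily repaired, gap: the degenerate case. When the balancing scale exceeds $R/2$, i.e. $l\gtrsim 1$, you take $\lambda=R/2$; the near-field bound $R^{2}\nrm{\omg/r}_{L^{\ift}}\le lR^{2}\nrm{\omg/r}_{L^{\ift}}=R^{1/4}\nrm{u}_{L^{2}}^{1/2}\nrm{\omg/r}_{L^{\ift}}^{1/2}$ is then fine, but your far-field IBP bound becomes $R^{-3/2}\nrm{u}_{L^{2}}=l\cdot R^{1/4}\nrm{u}_{L^{2}}^{1/2}\nrm{\omg/r}_{L^{\ift}}^{1/2}$, which exceeds the right-hand side of \eqref{eq:uratRz} exactly when $l>1$ (small $R$ relative to the data), and it cannot be absorbed into the first term since $\nrm{u}_{L^{2}}$ is independent of the vorticity norms. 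The fix is one line: when $l\ge1$, perform no integration by parts at all; the strip $\{\br{r}\ge R/2,\ D\ge R/2\}$ satisfies $\br{r}\le R\le 2D$, so the axial-type direct estimate applies to it and yields the first term, precisely mirroring the paper, where $(I_{21})$ vanishes for $l\ge1$ and $(I_{22})$ is controlled through $\bar{l}=\min\{l,1\}$.
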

	
	\begin{proof}
		We fix some $z\in \bbR$, and  
		define using $\chi$ from \S \ref{sec:not} the rescaled cutoff functions
		\begin{equation}\label{eq:Phi}
			\Phi_{k}(\br{r},\br{z}):= \chi\bigg(\frac{ D({R,z,}\br{r},\br{z})  }{k}\bigg),\quad k>0.
		\end{equation}

		To begin with, using $k=R/2$, we split $u^{r}(R,z)$ into
		\begin{equation}\label{eq:urRzsplit}
			\begin{split}
				&\ \underbrace{\iint_{\Pi}F^{r}(R,z,\br{r},\br{z})\big(1-\Phi_{R/2}(\br{r},\br{z})\big)\omg(\br{r},\br{z})d\br{z}d\br{r}}_{=:(I_{1})} +\underbrace{\iint_{\Pi}F^{r}(R,z,\br{r},\br{z})\Phi_{R/2}(\br{r},\br{z})\omg(\br{r},\br{z})d\br{z}d\br{r}}_{=:(I_{2})}. 
			\end{split}
		\end{equation}
		To ease notation, let us abbreviate integrals $ (I_{1}) $ and $ (I_{2}) $ as
		$$ (I_{1})=\iint_{\Pi}F^{r}(1-\Phi_{R/2})\omg,\quad (I_{2})=\iint_{\Pi}F^{r}\Phi_{R/2}\omg. $$
		
		\begin{figure} 
			\centering
			\includegraphics[scale=0.25]{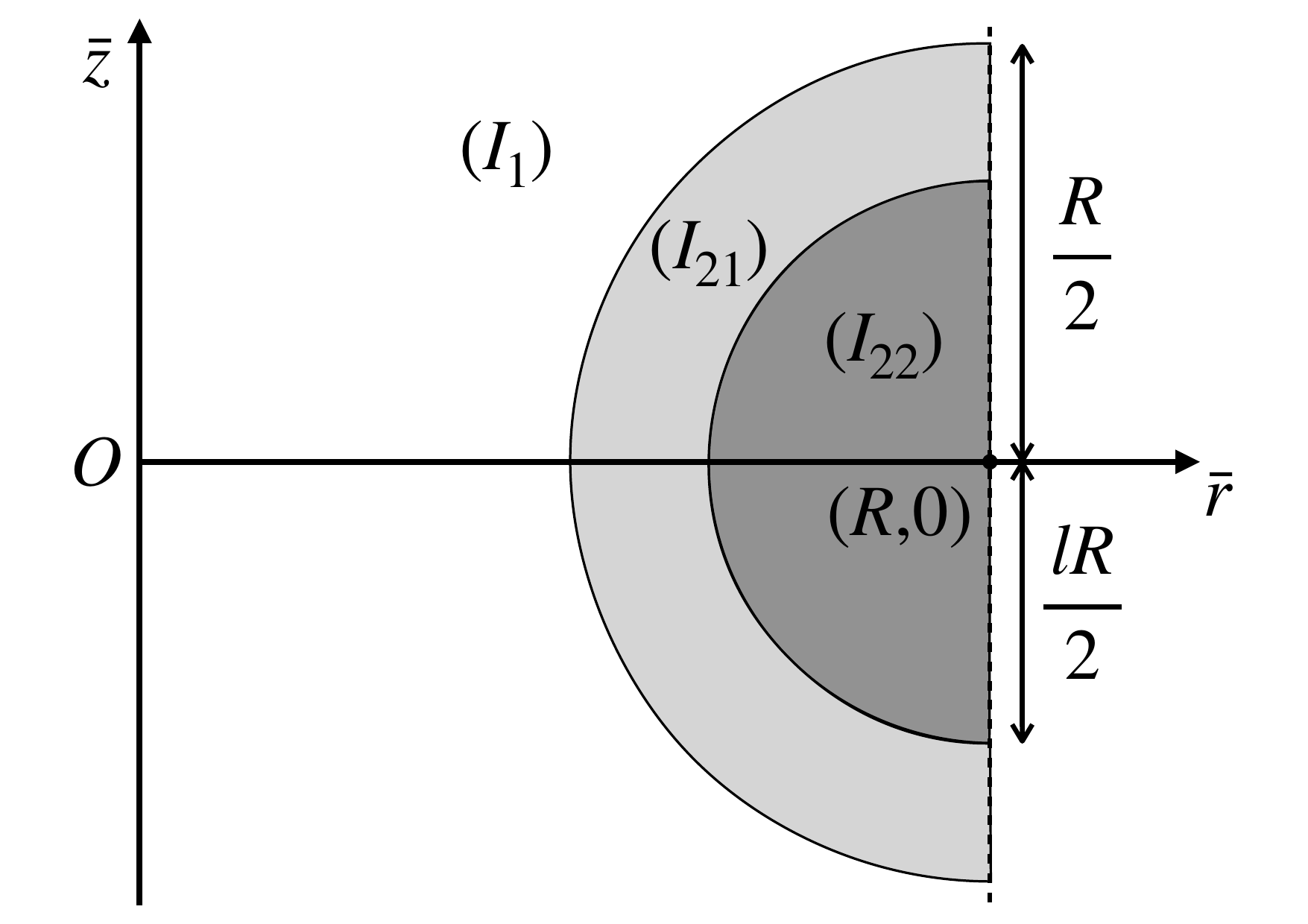}
			\caption{A diagram showing of $ (I_{1}) $ from \eqref{eq:urRzsplit} and $ (I_{21}) $ 
				from \eqref{eq:I2split} when $ l<1 $, with $z = 0$. The darkest region represents $(I_{22})$, which is $(I_{2}) $ minus $ (I_{21}) $.}  
			\label{fig:region}
		\end{figure}
		
		Below, we shall further split $(I_{2})$ into $(I_{21})$ and $(I_{22})$, see Figure \ref{fig:region}. 
		
		\medskip 
		
		\noindent \textbf{Estimate of $ (I_{1}) $}. In the term $ (I_{1}) $, the integrand is supported on the region $ \Pi\setminus B_{R/4}(R,z) $. To estimate this term, we use the estimate
		\begin{equation}\label{eq:FrRznonloc}
			\begin{split}
				|F^{r}(R,z,\br{r},\br{z})|&\lesssim\frac{|\br{z}-z|}{R^{3/2}\br{r}^{1/2}} \frac{(R\br{r})^{5/2}}{D_{{R,z}}^{5}}\leq\frac{R\br{r}^{2}}{ D_{{R,z}}^{4} },
			\end{split}
		\end{equation}
		which holds on the region $\Pi\setminus B_{R/4}(R,z)$. 
		Then we have, using $\br{r} \le  R  \le 4 D_{{R,z}}, $\footnote{This is the only place where we really need that $R$ is the maximal support radius of $\omg$.}
		\begin{align}
			|(I_{1})|&\lesssim\iint_{{\Pi\setminus B_{R/4}(R,z)}}\frac{R\br{r}^{2} 
			}{D_{{R,z}}^{4}}  |\omg|^{2/3} |\omg|^{1/3} \frac{\br{r}^{1/3}}{\br{r}^{1/3}}\lesssim R^{1/3} \bigg\|\frac{\omg}{r}\bigg\|_{L^{\ift}({\Pi})}^{1/3} \iint_{{\Pi\setminus B_{R/4}(R,z)}}\frac{1
			}{D_{{R,z}}}  |\omg|^{2/3}\nonumber\\
			&\leq R^{1/3} \bigg\|\frac{\omg}{r}\bigg\|_{L^{\ift}({\Pi})}^{1/3} \bigg(\iint_{{\Pi\setminus B_{R/4}(R,z)}}\frac{1
			}{D_{{R,z}}^{3}} \bigg)^{1/3} \bigg(\iint_{{\Pi\setminus B_{R/4}(R,z)}}|\omg|\bigg)^{2/3}\nonumber\\
			&\lesssim R^{1/3} \bigg\|\frac{\omg}{r}\bigg\|_{L^{\ift}({\Pi})}^{1/3}  R^{-1/3} {\nrm{\omg}_{L^{1}(\Pi)}^{2/3}} {\sim}
			\bigg\|\frac{\omg}{r}\bigg\|_{L^{\ift}(\bbR^{3})}^{1/3}  \bigg\|\frac{\omg}{r}\bigg\|_{L^{1}(\bbR^{3})}^{2/3}\label{eq:I1'}
		\end{align}

		\medskip 
		
		\noindent \textbf{Estimate of $ (I_{2}) $: splitting further}. 
		To estimate $(I_{2})$, we introduce the ratio \begin{equation}\label{eq:l}
			\begin{split}
				l:=\frac{1}{ R^{7/4}} \nrm{u}_{L^{2}(\bbR^{3})}^{1/2} \bigg\|\frac{\omg}{r}\bigg\|_{L^{\ift}(\bbR^{3})}^{-1/2}  
			\end{split}
		\end{equation} and further split $(I_{2})$ into
		\begin{equation}\label{eq:I2split}
			\underbrace{\iint_{\Pi}F^{r}(1-\Phi_{lR})\Phi_{R/2}\omg}_{=:(I_{21})}+\underbrace{\iint_{\Pi}F^{r}\Phi_{lR}\Phi_{R/2}\omg}_{=:(I_{22})}.
		\end{equation}
		Note that $l$ is a non-dimensional parameter. 
		(When $l \ge 1$, we simply have that $(I_{21}) = 0$.)  {Throughout estimates of $(I_{21})$ and $(I_{22})$, we shall use the estimate
			\begin{equation}\label{eq:FrinI2}
				|F^{r}(R,z,\br{r},\br{z})|\lesssim\frac{|\br{z}-z|}{R^{3/2}\br{r}^{1/2}} \frac{R\br{r}}{D_{{R,z}}^{2}}\leq\frac{1}{R^{1/2}}\frac{\br{r}^{1/2}}{ D_{{R,z}} },
			\end{equation}
			that holds on the region $B_{R/2}(R,z)$.}

		\medskip 
		
		\noindent \textbf{Estimate of $ (I_{22}) $.} Let us first estimate $ (I_{22}) $, which is simpler. We can bound using $\bar{l} := \min\{ l , 1 \}$: \begin{align}
			|(I_{22})|&\leq
			\iint_{{B_{\bar{l}R}(R,z)}}  | F^{r} 
			\omg| 
			\lesssim\frac{1}{R^{1/2}} \iint_{{B_{\bar{l}R}(R,z)}}\frac{\br{r}^{1/2}}{D_{{R,z}}} 
			|\omg|  \frac{\br{r} }{\br{r} }\nonumber\\
			&\lesssim\frac{1}{R^{1/2}} \bigg\|\frac{\omg}{r}\bigg\|_{L^{\ift}({\Pi})}  R^{3/2}  \iint_{{B_{\bar{l}R}(R,z)}}\frac{1
			}{D_{{R,z}} }  {\lesssim R
				\bigg\|\frac{\omg}{r}\bigg\|_{L^{\ift}({\Pi})}
				\bar{l} R } \nonumber \\
			&\sim  \bar{l}R^{2} \bigg\|\frac{\omg}{r}\bigg\|_{L^{\ift}(\bbR^{3})} \le R^{1/4} \nrm{u}_{L^{2}(\bbR^{3})}^{1/2} \bigg\|\frac{\omg}{r}\bigg\|_{L^{\ift}(\bbR^{3})}^{1/2}. \label{eq:I22'}
		\end{align}  In the last inequality, we have just used $\bar{l}\le l$ and the definition of $l$.

		\medskip 
		
		\noindent \textbf{Estimate of $ (I_{21}) $.} In the estimate of $(I_{21})$, we may assume $l<1$ since otherwise it simply vanishes. 
		Now we use integration by parts to write 
		\begin{equation}\label{eq:I21'ibp}
			\begin{split}
				(I_{21})=&\ \iint_{\Pi}\Big[\ \big[\rd_{\br{z}}\big(F^{r}(1-\Phi_{lR})\Phi_{R/2}\big)\big]u^{r}-\big[\rd_{\br{r}}\big(F^{r}(1-\Phi_{lR})\Phi_{R/2}\big)\big]u^{z}\Big]\\
				=&\ {\underbrace{\iint_{\Pi}(\rd_{\br{z}}F^{r})(1-\Phi_{lR})\Phi_{R/2}u^{r}}_{=:(I_{211})}-\underbrace{\iint_{\Pi}F^{r}(\rd_{\br{z}}\Phi_{lR})\Phi_{R/2}u^{r}}_{=:(I_{212})}+\underbrace{\iint_{\Pi}F^{r}(1-\Phi_{lR})(\rd_{\br{z}}\Phi_{R/2})u^{r}}_{=:(I_{213})}}\\
				&{-\underbrace{\iint_{\Pi}(\rd_{\br{r}}F^{r})(1-\Phi_{lR})\Phi_{R/2}u^{z}}_{=:(I_{214})}+\underbrace{\iint_{\Pi}F^{r}(\rd_{\br{r}}\Phi_{lR})\Phi_{R/2}u^{z}}_{=:(I_{215})}-\underbrace{\iint_{\Pi}F^{r}(1-\Phi_{lR})(\rd_{\br{r}}\Phi_{R/2})u^{z}}_{=:(I_{216})}.}
			\end{split}
		\end{equation}
		Now let us estimate each term. First, we use the estimate
		$$ |\rd_{\br{z}}F^{r}(R,z,\br{r},\br{z})|
		\lesssim\frac{1}{R^{3/2}\br{r}^{1/2}} \bigg[\frac{R\br{r}}{D_{{R,z}}^{2}}+\frac{(\br{z}-z)^{2}}{R\br{r}} \frac{(R\br{r})^{2}}{ D_{{R,z}}^{4}}\bigg]\lesssim\frac{1}{R^{1/2}} \frac{\br{r}^{1/2}}{D_{{R,z}}^{2}} $$
		to get, using $\br{r} \le R$, 
		\begin{align}
			{|(I_{211})|}
			&\lesssim\frac{1}{R^{1/2}} \iint_{{B_{R/2}(R,z)\setminus B_{(lR)/2}(R,z)}}\frac{\br{r}^{1/2}}{D_{{R,z}}^{2}} 
			|u^{r}|\nonumber\\
			&\lesssim\frac{1}{R^{1/2}} \bigg(\iint_{{B_{R/2}(R,z)\setminus B_{(lR)/2}(R,z)}}\frac{1
			}{ D_{{R,z}}^{4}} \bigg)^{1/2} \bigg(\iint_{{B_{R/2}(R,z)\setminus B_{(lR)/2}(R,z)}}\br{r}|u^{r}|^{2}\bigg)^{1/2}\nonumber\\
			&\lesssim {\frac{1}{R^{1/2}}\frac{1}{lR}\nrm{r^{1/2}u}_{L^{2}(\Pi)}\sim}\frac{1}{l R^{3/2}} \nrm{u}_{L^{2}(\bbR^{3})}.
			\label{eq:rdzFrPhi}
		\end{align}
		Next, using the estimate \eqref{eq:FrinI2} 
		and that   $D_{R,z}\sim lR$ on the support of $\rd_{\br{z}}\Phi_{lR}$, we have
		\begin{align}
			{|(I_{212})|}
			&\lesssim\frac{1}{R^{1/2}} \iint_{{B_{R/2}(R,z)}}\frac{\br{r}^{1/2}}{D_{{R,z}}} \frac{1}{lR} \left|\chi'\bigg(\frac{D_{{R,z}}}{lR}\bigg) \right| \frac{|\br{z}-z|}{D_{{R,z}}} 
			|u^{r}|\nonumber\\
			&\lesssim\frac{1}{R^{1/2}} \iint_{{B_{R/2}(R,z)\setminus B_{(lR)/2}(R,z)}}\frac{\br{r}^{1/2}}{D_{{R,z}}^{2}} 
			|u^{r}|
			\lesssim  \frac{1}{l R^{3/2}} \nrm{u}_{L^{2}(\bbR^{3})}.
			\label{eq:FrrdzPhi}
		\end{align} 
		Similarly, using $D_{R,z}\sim R$ on the support of $\rd_{\br{z}}\Phi_{R/2}$, we get
		\begin{align}
			{|(I_{213})|}
			&\lesssim\frac{1}{R^{1/2}} \iint_{{\Pi\setminus B_{(lR)/2}(R,z)}}\frac{\br{r}^{1/2}}{D_{{R,z}}} \frac{1}{R} 
			\chi'\bigg(\frac{2D_{{R,z}}}{R}\bigg) \frac{|\br{z}-z|}{D_{{R,z}}} |u^{r}|\nonumber\\
			&\lesssim\frac{1}{R^{1/2}} \iint_{{B_{R/2}(R,z)\setminus B_{(lR)/2}(R,z)}}\frac{\br{r}^{1/2}}{D_{{R,z}}^{2}} 
			|u^{r}|
			\lesssim  \frac{1}{l R^{3/2}} \nrm{u}_{L^{2}(\bbR^{3})}.
			\label{eq:FrPhi}
		\end{align}  
		Now observe that 
		$$ |(\br{r}-R)(\br{r}+R)-(\br{z}-z)^{2}|\lesssim RD_{{R,z}} $$
		holds on the disc $B_{R/2}(R,z)$, which gives \begin{equation}\label{eq:rdrFrRzloc}
			\begin{split}
				|\rd_{\br{r}}F^{r}(R,z,\br{r},\br{z})|&\lesssim\frac{|\br{z}-z|}{(R\br{r})^{5/2}} \bigg[R\br{r} \frac{R\br{r}}{D_{{R,z}}^{2}}+|(\br{r}-R)(\br{r}+R)-(\br{z}-z)^{2}| \frac{(R\br{r})^{2}}{ D_{{R,z}}^{4}}\bigg]\\
				&\lesssim\frac{1}{(R\br{r})^{1/2}} \bigg[\frac{1}{D_{{R,z}}}+\frac{R}{D_{{R,z}}^{2}}\bigg] \lesssim\frac{1}{(R\br{r})^{1/2}} \frac{R}{D_{{R,z}}^{2}}.
			\end{split}
		\end{equation} 
		Using this, we have
		\begin{align}
			{|(I_{214})|}
			&\lesssim\frac{1}{R^{1/2}} \iint_{{B_{R/2}(R,z)\setminus B_{(lR)/2}(R,z)}}\frac{1}{\br{r}^{1/2}} \frac{R}{D_{{R,z}}^{2}} 
			|u^{z}| \frac{\br{r}^{1/2}}{\br{r}^{1/2}}\nonumber\\
			&\lesssim\frac{1}{R^{1/2}} \iint_{{B_{R/2}(R,z)\setminus B_{(lR)/2}(R,z)}}\frac{\br{r}^{1/2}}{D_{{R,z}}^{2}} 
			|u^{z}|
			\lesssim   \frac{1}{l R^{3/2}} \nrm{u}_{L^{2}(\bbR^{3})}.
			\label{eq:rdrFrPhi}
		\end{align}
		Similarly, we can get 
		\begin{equation}\label{eq:FrrdrPhi}
			{|(I_{215})|+|(I_{216})|}
			\lesssim  \frac{1}{l R^{3/2}} \nrm{u}_{L^{2}(\bbR^{3})},
		\end{equation}
		concluding that \begin{equation}\label{eq:I21}
			\begin{split}
				|(I_{21})|&\lesssim \frac{1}{l R^{3/2}} \nrm{u}_{L^{2}(\bbR^{3})} \lesssim R^{1/4} \nrm{u}_{L^{2}(\bbR^{3})}^{1/2} \bigg\|\frac{\omg}{r}\bigg\|_{L^{\ift}(\bbR^{3})}^{1/2},
			\end{split}
		\end{equation} where we have simply used the definition of $l$ from \eqref{eq:l}.

		\medskip 
		
		\noindent \textbf{Conclusion.} Combining \eqref{eq:I21} with \eqref{eq:I22'}, we get that \begin{equation}\label{eq:I2opt}
			\begin{split}
				|(I_{2})|\lesssim R^{1/4} \nrm{u}_{L^{2}(\bbR^{3})}^{1/2} \bigg\|\frac{\omg}{r}\bigg\|_{L^{\ift}(\bbR^{3})}^{1/2}. 
			\end{split}
		\end{equation} Therefore, combining \eqref{eq:I1'} and \eqref{eq:I2opt}, the proof is complete.
	\end{proof}
	
	\medskip 
	
	\subsection{Rate of vortex stretching}\label{sec:main}
	
	In this section, we complete the proof of Theorem \ref{thm:growth-optimal}.

	\begin{proof}[Proof of Theorem \ref{thm:growth-optimal}]  
		We denote the maximal radial support of the solution $\omg(t)$ of \eqref{eq:asEuler} with initial data $\omg_{0}$ as $R(t)$, with  $R_{0}:=R(0)$. Then for any $t\geq0$, using the estimate \eqref{eq:uratRz}, we have
		\begin{equation*}
			\begin{split}
				\left| \frac{d}{dt} R(t) \right| &\le \sup_{z\in\bbR}|u^{r}(t,R,z)| \\
				&\le C\bigg\|\frac{\omg(t,\cdot)}{r}\bigg\|_{L^{\ift}(\bbR^{3})}^{1/3}\bigg\|\frac{\omg(t,\cdot)}{r}\bigg\|_{L^{1}(\bbR^{3})}^{2/3}+CR^{1/4}(t) \nrm{u(t,\cdot)}_{L^{2}(\bbR^{3})}^{1/2}\bigg\|\frac{\omg(t,\cdot)}{r}\bigg\|_{L^{\ift}(\bbR^{3})}^{1/2} \\
				&\le C(c_{1}(\omg_0) + c_{2}(\omg_0) R^{1/4}(t)), 
			\end{split}
		\end{equation*} using conservations of the kinetic energy $\nrm{u}_{L^{2}}$ and the $L^{p}$-norms of $r^{-1}\omg$ with $p\in [1,\ift]$. This gives us the bound 
		\begin{equation}\label{eq:Rt4/3}
			R(t)\lesssim_{\omg_{0}}(1+t)^{4/3},\quad t\geq0.
		\end{equation}
		From this, we obtain
		\begin{equation*}
			\begin{split}
				\nrm{\omg(t,\cdot)}_{L^{\ift}(\bbR^{3})}&\leq\bigg\|\frac{\omg(t,\cdot)}{r}\bigg\|_{L^{\ift}(\bbR^{3})}  R(t)=\bigg\|\frac{\omg_{0}}{r}\bigg\|_{L^{\ift}(\bbR^{3})}  R(t) \lesssim_{\omg_{0}}(1+t)^{4/3},
			\end{split}
		\end{equation*} which finishes the proof.  \end{proof}

	\subsection{Higher dimensions}\label{sec:mainhighd}
	
	{In this section, we present generalizations of the 3D axisymmetric Biot--Savart law from \S\ref{subsec:BS} and the key estimate \eqref{eq:uratRz} from Proposition \ref{prop:urRz} to higher dimensions $d\geq3$. After that, we will prove Theorem \ref{thm:growth-optimalhighd}.}
	
	\subsubsection{Axisymmetric Biot--Savart law in higher dimensions}
	First,  {for any integer $d\geq3$,} we define 
	the elliptic integral
	\begin{equation}\label{eq:calFhighd}
		\calF_{(d)}(s):=\int_{0}^{\pi}\frac{\cos\alp\sin^{d-3}\alp}{[2(1-\cos\alp)+s]^{d/2-1}}d\alp,\quad s>0,
	\end{equation}
	{which is a high dimensional extension of $\calF=\calF_{(3)}$ from \eqref{eq:calF}.} 
	Then, the axisymmetric Biot--Savart formula can be written in the form {(\cite{GMT2023,Limglobal23})}
	\begin{equation}\label{eq:asBiotSavart_urhighd}
		\begin{split}
			u^{r}(r,z) = \iint_{\Pi}F_{(d)}^{r}(r,z,\br{r},\br{z})\omg(\br{r},\br{z})d\br{z}d\br{r}, \qquad F_{(d)}^{r}(r,z,\br{r},\br{z})=c_{d}  \frac{\br{r}^{d/2-2}(z-\br{z})}{r^{d/2}}\calF_{(d)}'\bigg(\frac{D^{2}}{r\br{r}}\bigg), 
		\end{split}
	\end{equation}
	{for some constant $c_{d}>0$.} 
	We will use the following extension of Lemma \ref{lem:calFest} to higher dimensions.
	\begin{lem}\label{lem:calFdl}
		For any integer $\ell \ge 1$, the $ \ell $-th derivative of $ \calF $ satisfies
		\begin{align} 
			|\calF_{(d)}^{(\ell)}(s)|&\lesssim_{\ell}\min\lbrace s^{-\ell}, s^{-(\ell+d/2)}\rbrace .\label{eq:calFdkest}
		\end{align}
	\end{lem}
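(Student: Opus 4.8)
The plan is to mimic the proof of Lemma \ref{lem:calFest} (the case $d=3$ due to Feng--Sverak) in general dimension. First I would differentiate \eqref{eq:calFhighd} under the integral sign: since $2(1-\cos\alpha)+s\ge s>0$ on $[0,\pi]$, the denominator is bounded below and differentiation in $s$ is justified for every $s>0$, yielding
\[
	\calF_{(d)}^{(\ell)}(s)=c_{d,\ell}\int_{0}^{\pi}\frac{\cos\alpha\,\sin^{d-3}\alpha}{[2(1-\cos\alpha)+s]^{\beta}}\,d\alpha,\qquad \beta:=\tfrac d2-1+\ell,
\]
where $c_{d,\ell}=(-1)^{\ell}(\tfrac d2-1)(\tfrac d2)\cdots(\tfrac d2-2+\ell)$ depends only on $d$ and $\ell$ and may be absorbed into $\lesssim_{\ell}$. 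Writing $J(s)$ for the displayed integral, it then suffices to prove the two bounds $|J(s)|\lesssim s^{-\ell}$ and $|J(s)|\lesssim s^{-(\ell+d/2)}$, each valid for all $s>0$; their minimum is exactly the right-hand side of \eqref{eq:calFdkest}.

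For $|J(s)|\lesssim s^{-\ell}$ (the sharp bound for small $s$), I would estimate crudely, discarding the sign of $\cos\alpha$. Using $\sin\alpha\le\alpha$ together with the elementary inequality $2(1-\cos\alpha)=4\sin^{2}(\alpha/2)\ge(4/\pi^{2})\alpha^{2}$ on $[0,\pi]$, one gets
\[
	|J(s)|\le\int_{0}^{\pi}\frac{\alpha^{d-3}}{[(4/\pi^{2})\alpha^{2}+s]^{\beta}}\,d\alpha.
\]
The rescaling $\alpha=\sqrt{s}\,u$ converts the right-hand side into $s^{(d-2)/2-\beta}$ times $\int_{0}^{\pi/\sqrt{s}}u^{d-3}((4/\pi^{2})u^{2}+1)^{-\beta}\,du$. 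Since $(d-2)/2-\beta=-\ell$ and the $u$-integrand decays like $u^{-1-2\ell}$ at infinity (and is integrable at $0$ because $d-3>-1$), the $u$-integral is bounded uniformly in $s$, giving $|J(s)|\lesssim s^{-\ell}$ for all $s>0$.

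For $|J(s)|\lesssim s^{-(\ell+d/2)}$ (the sharp bound for large $s$), the crux—and the main obstacle—is the cancellation
\[
	\int_{0}^{\pi}\cos\alpha\,\sin^{d-3}\alpha\,d\alpha=0,
\]
which holds since $\cos\alpha$ is odd and $\sin^{d-3}\alpha$ even about $\alpha=\pi/2$. Subtracting $s^{-\beta}$ times this vanishing integral from $J(s)$ and using the fundamental theorem of calculus,
\[
	[2(1-\cos\alpha)+s]^{-\beta}-s^{-\beta}=-\beta\int_{0}^{2(1-\cos\alpha)}(t+s)^{-\beta-1}\,dt,
\]
I would bound the inner integrand by $s^{-\beta-1}$ and the length of integration by $2(1-\cos\alpha)\le4$, so that $|J(s)|\le 4\beta\,s^{-\beta-1}\int_{0}^{\pi}|\cos\alpha|\sin^{d-3}\alpha\,d\alpha\lesssim s^{-\beta-1}=s^{-(\ell+d/2)}$, again for all $s>0$. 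Combining the two estimates yields $|J(s)|\lesssim\min\{s^{-\ell},s^{-(\ell+d/2)}\}$ and hence \eqref{eq:calFdkest}. I expect the only genuinely dimension-sensitive step to be recognizing that the naive leading large-$s$ term cancels, forcing the extra power $s^{-1}$; the remaining scaling bookkeeping is identical to the case $d=3$.
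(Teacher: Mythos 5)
Your proof is correct, and it is genuinely more than what the paper provides: the paper's ``proof'' of Lemma \ref{lem:calFdl} is a one-line citation to \cite[Lem.\ 2.2]{CJLglobal22} for the case $\ell=1$, together with the assertion that higher $\ell$ follow similarly, whereas you give a complete self-contained argument. Your two mechanisms are exactly the right ones, and they match what the cited lemma does in spirit: the rescaling $\alpha=\sqrt{s}\,u$ with $\sin^{d-3}\alpha\le\alpha^{d-3}$ and $2(1-\cos\alpha)\gtrsim\alpha^2$ yields the uniform bound $s^{-\ell}$ (your exponent bookkeeping checks out: $(d-2)/2-\beta=-\ell$, and the $u$-integral converges at infinity like $u^{-1-2\ell}$ precisely because $\ell\ge1$, and at $0$ because $d\ge3$); and for the decay $s^{-(\ell+d/2)}$ you correctly identify the indispensable cancellation $\int_0^\pi\cos\alpha\,\sin^{d-3}\alpha\,d\alpha=0$ (equivalently $[\sin^{d-2}\alpha/(d-2)]_0^\pi=0$), without which the naive bound only gives $s^{-\beta}=s^{-(\ell+d/2-1)}$, short by one power; subtracting $s^{-\beta}$ times the vanishing integral and using $|[X+s]^{-\beta}-s^{-\beta}|\le\beta X s^{-\beta-1}$ with $X=2(1-\cos\alpha)\le4$ is clean and valid for all $s>0$, so taking the minimum of the two global bounds is legitimate. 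Two cosmetic remarks only: your implicit constants also depend on $d$ (through $c_{d,\ell}$, $\beta$, and $\int_0^\pi|\cos\alpha|\sin^{d-3}\alpha\,d\alpha$), which is harmless here since $d$ is fixed but should be acknowledged given the lemma writes $\lesssim_{\ell}$; and the differentiation under the integral sign deserves the one-line justification you give (denominator bounded below by $s/2$ locally uniformly), after which $c_{d,\ell}=(-1)^\ell(\tfrac d2-1)(\tfrac d2)\cdots(\tfrac d2-2+\ell)$ is as you state.
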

	
	{\begin{proof}
			For the case $\ell=1$, we refer the reader to \cite[Lem. 2.2]{CJLglobal22} and its proof. The remaining cases can be shown in a similar way.
	\end{proof}}
	
	\subsubsection{The $R^{d/4-1/2}$ estimate}  {Next, we extend the key estimate \eqref{eq:uratRz} to high dimensional cases. From this estimate, the radial velocity is bounded \textit{on} the maximal support radius of the vorticity.}
	
	\begin{proposition}\label{prop:urRzhighd}
		Let $\omg$ be compactly supported with maximal radius of the support $R>0$. Then 
		\begin{equation}\label{eq:uratRzhighd}
			\sup_{z\in\bbR}|u^{r}(R,z)|\lesssim\bigg\|\frac{\omg}{r^{d-2}}\bigg\|_{L^{\ift}(\bbR^{d})}^{1/d}\bigg\|\frac{\omg}{r^{d-2}}\bigg\|_{L^{1}(\bbR^{d})}^{1-1/d}+R^{d/4-1/2} \nrm{u}_{L^{2}(\bbR^{d})}^{1/2}\bigg\|\frac{\omg}{r^{d-2}}\bigg\|_{L^{\ift}(\bbR^{d})}^{1/2}.
	\end{equation}	\end{proposition}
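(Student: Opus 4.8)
The plan is to transcribe, step for step, the proof of Proposition \ref{prop:urRz}, letting the dimension enter only through the exponents supplied by Lemma \ref{lem:calFdl} and through the radial weight $r^{d-2}$ coming from the $d$-dimensional volume element $\iint_\Pi(\cdot)\,r^{d-2}\,d\br{r}d\br{z}$. Fixing $z$, I split $u^r(R,z)$ exactly as in \eqref{eq:urRzsplit}, with the cutoffs $\Phi_k$ of \eqref{eq:Phi}, into a far piece $(I_1)$ supported on $\Pi\setminus B_{R/4}(R,z)$ and a near piece $(I_2)$ supported on $B_{R/2}(R,z)$. The key structural observation is that every integral over $\Pi$ is two-dimensional, so the elementary planar integrals are dimension-independent; only the kernel powers and the $\br{r}$-weights carry $d$.

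For $(I_1)$ I first record the far-field kernel bound. On $\Pi\setminus B_{R/4}(R,z)$ one has $\br{r}\le R\le 4D$, hence $s=D^2/(R\br{r})\gtrsim 1$, so Lemma \ref{lem:calFdl} gives $|\calF_{(d)}'(s)|\lesssim s^{-(1+d/2)}$; inserting this into \eqref{eq:asBiotSavart_urhighd} and using $|z-\br{z}|\le D$ yields $|F_{(d)}^r|\lesssim R\br{r}^{d-1}/D^{d+1}$ (which is $R\br{r}^2/D^4$ when $d=3$). I then write $|\omg|=|\omg|^{1-1/d}|\omg|^{1/d}$, extract $(|\omg|/\br{r}^{d-2})^{1/d}\le\nrm{\omg/r^{d-2}}_{L^\ift}^{1/d}$, pull out $R^{1-2/d}$ and absorb the surviving $\br{r}$-powers through $\br{r}\le R\le 4D$ to reach an integrand $\lesssim D^{-1}|\omg|^{1-1/d}$, and finally apply Hölder with exponents $(d,\tfrac{d}{d-1})$. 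Because $\iint_{\Pi\setminus B_{R/4}}D^{-d}\sim R^{2-d}$, raising to the power $1/d$ gives $R^{2/d-1}$, which cancels the $R^{1-2/d}$ exactly; after converting $\nrm{\omg}_{L^1(\Pi)}\sim\nrm{\omg/r^{d-2}}_{L^1(\bbR^d)}$ this produces the first term of \eqref{eq:uratRzhighd}.

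For $(I_2)$ I introduce the dimensionally corrected ratio
\[
l:=R^{-(3d/4-1/2)}\nrm{u}_{L^2(\bbR^d)}^{1/2}\nrm{\omg/r^{d-2}}_{L^\ift(\bbR^d)}^{-1/2},
\]
which reduces to \eqref{eq:l} at $d=3$, and split $(I_2)=(I_{21})+(I_{22})$ as in \eqref{eq:I2split}. On $B_{R/2}(R,z)$ one has $s<1$, so $|\calF_{(d)}'(s)|\lesssim s^{-1}$ gives $|F_{(d)}^r|\lesssim\br{r}^{d/2-1}/(R^{d/2-1}D)$. For the innermost piece I bound $|(I_{22})|\le\iint_{B_{\bar l R}}|F_{(d)}^r\omg|$ with $\bar l=\min\{l,1\}$, use $|\omg|\le\nrm{\omg/r^{d-2}}_{L^\ift}\br{r}^{d-2}$, $\br{r}\lesssim R$, and $\iint_{B_{\bar l R}}D^{-1}\sim\bar l R$ to get $|(I_{22})|\lesssim\bar l R^{d-1}\nrm{\omg/r^{d-2}}_{L^\ift}$; the definition of $l$ then turns this into $R^{d/4-1/2}\nrm{u}_{L^2}^{1/2}\nrm{\omg/r^{d-2}}_{L^\ift}^{1/2}$. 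For the annular piece I integrate by parts in $(\br{r},\br{z})$ via $\omg=\rd_{\br{r}}u^z-\rd_{\br{z}}u^r$, with no boundary contribution since the cutoffs localize near $(R,z)$ and away from $\br{r}=0$, obtaining the six terms of \eqref{eq:I21'ibp}. Each term is closed by Cauchy--Schwarz against $r^{(d-2)/2}u$, using $\nrm{r^{(d-2)/2}u}_{L^2(\Pi)}\sim\nrm{u}_{L^2(\bbR^d)}$ and the annulus integral $\iint D^{-4}\sim(lR)^{-2}$. The requisite kernel-derivative bounds on $B_{R/2}$, namely $|\rd_{\br{z}}F_{(d)}^r|\lesssim\br{r}^{d/2-1}/(R^{d/2-1}D^2)$ and $|\rd_{\br{r}}F_{(d)}^r|\lesssim\br{r}^{d/2-2}R^{2-d/2}/D^2$, follow from Lemma \ref{lem:calFdl} together with the identity $|(\br{r}-R)(\br{r}+R)-(\br{z}-z)^2|\lesssim RD$ already used in \eqref{eq:rdrFrRzloc}; a convenient feature is that after multiplying and dividing by one power of $\br{r}$ (and using $\br{r}\sim R$ on $B_{R/2}$) the weight collapses to $\br{r}^{(d-2)/2}$ in every term. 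This gives $|(I_{21})|\lesssim\tfrac{1}{lR^{d/2}}\nrm{u}_{L^2}$, and substituting $l$ yields the same second term as $(I_{22})$. Combining the three estimates completes the proof.

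The main obstacle I anticipate is not conceptual but bookkeeping: confirming that the $d$-dependent exponents conspire so that all powers of $R$ cancel in $(I_1)$ and so that $(I_{21})$ and $(I_{22})$ balance precisely at $R^{d/4-1/2}$, which is exactly what fixes the power $3d/4-1/2$ in the definition of $l$. The one genuinely structural point to verify is that the planar curl relation $\omg=\rd_{\br{r}}u^z-\rd_{\br{z}}u^r$ is the correct scalar reduction underlying the higher-dimensional law \eqref{eq:asBiotSavart_urhighd}, so that the integration by parts in $(I_{21})$ is legitimate for every $d\ge 3$; granting this, the argument is uniform in the dimension.
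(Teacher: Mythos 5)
Your proposal is correct and follows essentially the same route as the paper's own proof: the identical splitting into $(I_1')$, $(I_{21}')$, $(I_{22}')$ with the cutoffs $\Phi_k$, the same dimensionally corrected ratio $l=R^{-(3d)/4+1/2}\nrm{u}_{L^{2}(\bbR^{d})}^{1/2}\|\omg/r^{d-2}\|_{L^{\ift}(\bbR^{d})}^{-1/2}$, the same kernel bounds \eqref{eq:FrRznonlochighd}, \eqref{eq:FrinI2highd}, \eqref{eq:rdrFrRzlochighd} from Lemma \ref{lem:calFdl}, and the same integration by parts closed by Cauchy--Schwarz against $r^{(d-2)/2}u$. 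The structural point you flag (that $\omg=\rd_{\br{r}}u^{z}-\rd_{\br{z}}u^{r}$ persists for axisymmetric no-swirl flows in every dimension $d\ge 3$, consistent with \eqref{eq:ansatz} and \eqref{eq:asBiotSavart_urhighd}) is indeed the implicit justification the paper also relies on in \eqref{eq:I21''ibp}.
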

	
	\begin{proof}
		We use the rescaled cutoff function \eqref{eq:Phi} once more to extend the proof of Proposition \ref{prop:urRz} to higher dimensions. First, we split $u^{r}(R,z)$ into
		\begin{equation}\label{eq:urRzsplithighd}
			\begin{split}
				&\ \underbrace{\iint_{\Pi}F_{(d)}^{r}(R,z,\br{r},\br{z})\big(1-\Phi_{R/2}(\br{r},\br{z})\big)\omg(\br{r},\br{z})d\br{z}d\br{r}}_{=:(I_{1}')} +\underbrace{\iint_{\Pi}F_{(d)}^{r}(R,z,\br{r},\br{z})\Phi_{R/2}(\br{r},\br{z})\omg(\br{r},\br{z})d\br{z}d\br{r}}_{=:(I_{2}')},
			\end{split}
		\end{equation}
		and abbreviate integrals $ (I_{1}') $ and $ (I_{2}') $ as
		$$ (I_{1}')=\iint_{\Pi}F_{(d)}^{r}(1-\Phi_{R/2})\omg,\quad (I_{2}')=\iint_{\Pi}F_{(d)}^{r}\Phi_{R/2}\omg. $$
		\medskip 
		
		\noindent \textbf{Estimate of $ (I_{1}') $}.  {Recall from the 3D case that} 
		the term $ (I_{1}') $ 
		is supported on the region $ \Pi\setminus B_{R/4}(R,z) $.  {Then using the estimate}
		\begin{equation}\label{eq:FrRznonlochighd}
			\begin{split}
				|F_{(d)}^{r}(R,z,\br{r},\br{z})|&\lesssim\frac{\br{r}^{d/2-2}|\br{z}-z|}{R^{d/2}} \frac{(R\br{r})^{d/2+1}}{D_{{R,z}}^{d+2}}\leq\frac{R\br{r}^{d-1}}{ D_{{R,z}}^{d+1} },
			\end{split}
		\end{equation}
		that holds on the region $\Pi\setminus B_{R/4}(R,z)$, and 
		using $\br{r} \le  R  \le 4 D_{{R,z}}, $ 
		{we can estimate this term as}
		\begin{align}
			|(I_{1}')|&\lesssim\iint_{{\Pi\setminus B_{R/4}(R,z)}}\frac{R\br{r}^{d-1} 
			}{D_{{R,z}}^{d+1}}  |\omg|^{1-1/d} |\omg|^{1/d} \frac{\br{r}^{1-2/d}}{\br{r}^{1-2/d}}\nonumber\\
			&\lesssim R^{1-2/d} \bigg\|\frac{\omg}{r^{d-2}}\bigg\|_{L^{\ift}({\Pi})}^{1/d} \iint_{{\Pi\setminus B_{R/4}(R,z)}}\frac{1
			}{D_{{R,z}}}  |\omg|^{1-1/d}\nonumber\\
			&\leq R^{1-2/d} \bigg\|\frac{\omg}{r^{d-2}}\bigg\|_{L^{\ift}({\Pi})}^{1/d} \bigg(\iint_{{\Pi\setminus B_{R/4}(R,z)}}\frac{1
			}{D_{{R,z}}^{d}} \bigg)^{1/d} \bigg(\iint_{{\Pi\setminus B_{R/4}(R,z)}}|\omg|\bigg)^{1-1/d}\nonumber\\
			&\lesssim R^{1-2/d} \bigg\|\frac{\omg}{r^{d-2}}\bigg\|_{L^{\ift}({\Pi})}^{1/d}  R^{-(1-2/d)} {\nrm{\omg}
				_{L^{1}(\Pi)}^{1-1/d}} {\sim}
			\bigg\|\frac{\omg}{r^{d-2}}\bigg\|_{L^{\ift}(\bbR^{d})}^{1/d}  \bigg\|\frac{\omg}{r^{d-2}}\bigg\|_{L^{1}(\bbR^{d})}^{1-1/d}.\label{eq:I1''}
		\end{align}
		\medskip 
		
		\noindent \textbf{Estimate of $ (I_{2}') $: splitting further}. 
		{As we did in the case $d=3$, we take the ratio} 
		\begin{equation}\label{eq:lhighd}
			\begin{split}
				l:=R^{-(3d)/4+1/2} \nrm{u}_{L^{2}(\bbR^{d})}^{1/2} \bigg\|\frac{\omg}{r^{d-2}}\bigg\|_{L^{\ift}(\bbR^{d})}^{-1/2},  
			\end{split}
		\end{equation} and further split the term $(I_{2}')$ as
		\begin{equation*}
			\underbrace{\iint_{\Pi}F_{(d)}^{r}(1-\Phi_{lR})\Phi_{R/2}\omg}_{=:(I_{21}')}+\underbrace{\iint_{\Pi}F_{(d)}^{r}\Phi_{lR}\Phi_{R/2}\omg}_{=:(I_{22}')}.
		\end{equation*}
		{(We remind the reader that we simply have $(I_{21}') = 0$ for the case $l\geq1$.) We often use the estimate \begin{equation}\label{eq:FrinI2highd}
				|F_{(d)}^{r}(R,z,\br{r},\br{z})|\lesssim\frac{\br{r}^{d/2-2}|\br{z}-z|}{R^{d/2}} \frac{R\br{r}}{D_{{R,z}}^{2}}\leq\frac{\br{r}^{d/2-1}}{R^{d/2-1}} \frac{1}{D_{{R,z}}},
			\end{equation}
			which holds on the region $B_{R/2}(R,z)$, during estimates of $(I_{21}')$ and $(I_{22}')$.}
		
		\medskip 
		
		\noindent \textbf{Estimate of $ (I_{22}') $.}  {The estimate of $(I_{22}')$ is easily done by using $\bar{l} := \min\{ l , 1 \}$:}
		\begin{align}
			|(I_{22}')|&\leq
			\iint_{{B_{\bar{l}R}(R,z)}}  | F_{(d)}^{r} 
			\omg| 
			\lesssim\frac{1}{R^{d/2-1}} \iint_{{B_{\bar{l}R}(R,z)}}\frac{\br{r}^{d/2-1}}{D_{{R,z}}} 
			|\omg|  \frac{\br{r}^{d-2} }{\br{r}^{d-2} }\nonumber\\
			&\lesssim\frac{1}{R^{d/2-1}} \bigg\|\frac{\omg}{r^{d-2}}\bigg\|_{L^{\ift}({\Pi})}  R^{(3d)/2-3}  \iint_{{B_{\bar{l}R}(R,z)}}\frac{1
			}{D_{{R,z}} }  \nonumber\\
			&{\lesssim\frac{1}{R^{d/2-1}}\bigg\|\frac{\omg}{r^{d-2}}\bigg\|_{L^{\ift}({\Pi})}R^{(3d)/2-3} \bar{l} R } \nonumber \\
			&\sim  \bar{l}R^{d-1} \bigg\|\frac{\omg}{r^{d-2}}\bigg\|_{L^{\ift}(\bbR^{d})} \le R^{d/4-1/2} \nrm{u}_{L^{2}(\bbR^{d})}^{1/2} \bigg\|\frac{\omg}{r^{d-2}}\bigg\|_{L^{\ift}(\bbR^{d})}^{1/2}, \label{eq:I22''}
		\end{align}   {where we simply used $\bar{l}\le l$ and the definition of $l$ in the last inequality.}
		
		\medskip 
		
		\noindent \textbf{Estimate of $ (I_{21}') $.}  {Since $(I_{21}')$ becomes zero when $l\geq1$, we only consider the case $l<1$. }
		We use integration by parts  {once more} to write 
		\begin{equation}\label{eq:I21''ibp}
			\begin{split}
				(I_{21}')=&\ \iint_{\Pi}\Big[\ \big[\rd_{\br{z}}\big(F_{(d)}^{r}(1-\Phi_{lR})\Phi_{R/2}\big)\big]u^{r}-\big[\rd_{\br{r}}\big(F_{(d)}^{r}(1-\Phi_{lR})\Phi_{R/2}\big)\big]u^{z}\Big]\\
				=&\ {\underbrace{\iint_{\Pi}(\rd_{\br{z}}F_{(d)}^{r})(1-\Phi_{lR})\Phi_{R/2}u^{r}}_{=:(I_{211}')}-\underbrace{\iint_{\Pi}F_{(d)}^{r}(\rd_{\br{z}}\Phi_{lR})\Phi_{R/2}u^{r}}_{=:(I_{212}')}+\underbrace{\iint_{\Pi}F_{(d)}^{r}(1-\Phi_{lR})(\rd_{\br{z}}\Phi_{R/2})u^{r}}_{=:(I_{213}')}}\\
				&{-\underbrace{\iint_{\Pi}(\rd_{\br{r}}F_{(d)}^{r})(1-\Phi_{lR})\Phi_{R/2}u^{z}}_{=:(I_{214}')}+\underbrace{\iint_{\Pi}F_{(d)}^{r}(\rd_{\br{r}}\Phi_{lR})\Phi_{R/2}u^{z}}_{=:(I_{215}')}-\underbrace{\iint_{\Pi}F_{(d)}^{r}(1-\Phi_{lR})(\rd_{\br{r}}\Phi_{R/2})u^{z}}_{=:(I_{216}')}.}
			\end{split}
		\end{equation}
		{To estimate $(I_{211}')$, }
		we use the estimate
		$$ |\rd_{\br{z}}F_{(d)}^{r}(R,z,\br{r},\br{z})|
		\lesssim\frac{\br{r}^{d/2-2}}{R^{d/2}} \bigg[\frac{R\br{r}}{D_{{R,z}}^{2}}+\frac{(\br{z}-z)^{2}}{R\br{r}} \frac{(R\br{r})^{2}}{ D_{{R,z}}^{4}}\bigg]\lesssim\frac{\br{r}^{d/2-1}}{R^{d/2-1}} \frac{1}{D_{{R,z}}^{2}}. $$
		Using  {this and} $\br{r} \le R$,  {we have}
		\begin{align}
			{|(I_{211}')|}
			&\lesssim\frac{1}{R^{d/2-1}} \iint_{{B_{R/2}(R,z)\setminus B_{(lR)/2}(R,z)}}\frac{\br{r}^{d/2-1}}{D_{{R,z}}^{2}} 
			|u^{r}|\nonumber\\
			&\lesssim\frac{1}{R^{d/2-1}} \bigg(\iint_{{B_{R/2}(R,z)\setminus B_{(lR)/2}(R,z)}}\frac{1
			}{ D_{{R,z}}^{4}} \bigg)^{1/2} \bigg(\iint_{{B_{R/2}(R,z)\setminus B_{(lR)/2}(R,z)}}\br{r}^{d-2}|u^{r}|^{2}\bigg)^{1/2}\nonumber\\
			&\lesssim {\frac{1}{R^{d/2-1}}\frac{1}{lR}\nrm{r^{d/2-1}u}_{L^{2}(\Pi)}\sim}\frac{1}{l R^{d/2}} \nrm{u}_{L^{2}(\bbR^{d})}.
			\label{eq:rdzFrPhihighd}
		\end{align}
		Next,  {we use} 
		the estimate \eqref{eq:FrinI2highd} and  {the relation} 
		$D_{R,z}\sim lR$,  {which holds} on the support of $\rd_{\br{z}}\Phi_{lR}$,  {to get}
		\begin{align}
			{|(I_{212}')|}
			&\lesssim\frac{1}{R^{d/2-1}} \iint_{{B_{R/2}(R,z)}}\frac{\br{r}^{d/2-1}}{D_{{R,z}}} \frac{1}{lR} \left|\chi'\bigg(\frac{D_{{R,z}}}{lR}\bigg) \right| \frac{|\br{z}-z|}{D_{{R,z}}} 
			|u^{r}|\nonumber\\
			&\lesssim\frac{1}{R^{d/2-1}} \iint_{{B_{R/2}(R,z)\setminus B_{(lR)/2}(R,z)}}\frac{\br{r}^{d/2-1}}{D_{{R,z}}^{2}} 
			|u^{r}|
			\lesssim  \frac{1}{l R^{d/2}} \nrm{u}_{L^{2}(\bbR^{d})}.
			\label{eq:FrrdzPhihighd}
		\end{align} 
		{Likewise, we use} 
		$D_{R,z}\sim R$ on the support of $\rd_{\br{z}}\Phi_{R/2}$,  {which gives us}
		\begin{align}
			{|(I_{213}')|}
			&\lesssim\frac{1}{R^{d/2-1}} \iint_{{\Pi\setminus B_{(lR)/2}(R,z)}}\frac{\br{r}^{d/2-1}}{D_{{R,z}}} \frac{1}{R} 
			\chi'\bigg(\frac{2D_{{R,z}}}{R}\bigg) \frac{|\br{z}-z|}{D_{{R,z}}} |u^{r}|\nonumber\\
			&\lesssim\frac{1}{R^{d/2-1}} \iint_{{B_{R/2}(R,z)\setminus B_{(lR)/2}(R,z)}}\frac{\br{r}^{d/2-1}}{D_{{R,z}}^{2}} 
			|u^{r}|
			\lesssim  \frac{1}{l R^{d/2}} \nrm{u}_{L^{2}(\bbR^{d})}.
			\label{eq:FrPhihighd}
		\end{align}  
		Now  {to estimate $(I_{214}')$, we use the inequality}
		$$ |(\br{r}-R)(\br{r}+R)-(\br{z}-z)^{2}|\lesssim RD_{{R,z}}, $$
		{which} holds on the disc $B_{R/2}(R,z)$,  {to get}
		\begin{equation}\label{eq:rdrFrRzlochighd}
			\begin{split}
				|\rd_{\br{r}}F_{(d)}^{r}(R,z,\br{r},\br{z})|&\lesssim\frac{\br{r}^{d/2-4}|\br{z}-z|}{R^{d/2+1}} \bigg[R\br{r} \frac{R\br{r}}{D_{{R,z}}^{2}}+|(\br{r}-R)(\br{r}+R)-(\br{z}-z)^{2}| \frac{(R\br{r})^{2}}{ D_{{R,z}}^{4}}\bigg]\\
				&\lesssim\frac{\br{r}^{d/2-2}}{R^{d/2-1}} \bigg[\frac{1}{D_{{R,z}}}+\frac{R}{D_{{R,z}}^{2}}\bigg] \lesssim\frac{\br{r}^{d/2-2}}{R^{d/2-1}} \frac{R}{D_{{R,z}}^{2}}.
			\end{split}
		\end{equation} 
		{This gives us}
		\begin{align}
			{|(I_{214}')|}
			&\lesssim\frac{1}{R^{d/2-1}} \iint_{{B_{R/2}(R,z)\setminus B_{(lR)/2}(R,z)}}\br{r}^{d/2-2} \frac{R}{D_{{R,z}}^{2}} 
			|u^{z}| 
			\nonumber\\
			&\lesssim\frac{1}{R^{d/2-1}} \iint_{{B_{R/2}(R,z)\setminus B_{(lR)/2}(R,z)}}\frac{\br{r}^{d/2-1}}{D_{{R,z}}^{2}} 
			|u^{z}|
			\lesssim   \frac{1}{l R^{d/2}} \nrm{u}_{L^{2}(\bbR^{d})}.
			\label{eq:rdrFrPhihighd}
		\end{align}
		Similarly, we  {have}
		\begin{equation}\label{eq:FrrdrPhihighd}
			{|(I_{215}')|+|(I_{216}')|}
			\lesssim  \frac{1}{l R^{d/2}} \nrm{u}_{L^{2}(\bbR^{d})}.
		\end{equation}
		{As a result, using the definition of $l$ from \eqref{eq:lhighd}, we obtain}
		\begin{equation}\label{eq:I21''}
			\begin{split}
				|(I_{21}')|&\lesssim \frac{1}{l R^{d/2}} \nrm{u}_{L^{2}(\bbR^{d})} \lesssim R^{d/4-1/2} \nrm{u}_{L^{2}(\bbR^{d})}^{1/2} \bigg\|\frac{\omg}{r^{d-2}}\bigg\|_{L^{\ift}(\bbR^{d})}^{1/2}.
			\end{split}
		\end{equation} 
	\end{proof}

	\subsubsection{Rate of vortex stretching}
	
	We may now complete the proof of Theorem \ref{thm:growth-optimalhighd}.

	\begin{proof}[Proof of Theorem \ref{thm:growth-optimalhighd}]  
		{We use the same notation of $R(t)$ from the proof of Theorem \ref{thm:growth-optimal}.} 
		Then for any $t\geq0$,  {we use the estimate \eqref{eq:uratRzhighd} to obtain}
		\begin{equation*}
			\begin{split}
				\left| \frac{d}{dt} R(t) \right| &\le  C\bigg\|\frac{\omg(t,\cdot)}{r^{d-2}}\bigg\|_{L^{\ift}(\bbR^{d})}^{1/d}\bigg\|\frac{\omg(t,\cdot)}{r^{d-2}}\bigg\|_{L^{1}(\bbR^{d})}^{1-1/d}+CR^{d/4-1/2}(t) \nrm{u(t,\cdot)}_{L^{2}(\bbR^{d})}^{1/2}\bigg\|\frac{\omg(t,\cdot)}{r^{d-2}}\bigg\|_{L^{\ift}(\bbR^{d})}^{1/2} \\
				&\le C(c_{1}(\omg_0) + c_{2}(\omg_0) R^{d/4-1/2}(t)), 
			\end{split}
		\end{equation*}  {where we used} 
		conservations of the kinetic energy $\nrm{u}_{L^{2}}$ and the $L^{p}$-norms of $r^{-(d-2)}\omg$.  {From this, we have}
		\begin{equation}\label{eq:Rt4/3highd}
			R(t)\lesssim_{\omg_{0}}\begin{cases}
				A_{1}(d) (1+t)^{4/(6-d)},&\quad d=3,4,5,\\
				A_{2}e^{A_{3}t},&\quad d=6,
			\end{cases}
			\quad t\geq0,
		\end{equation}
		{which gives us}
		\begin{equation*}
			\begin{split}
				\nrm{\omg(t,\cdot)}_{L^{\ift}(\bbR^{d})}&\leq\bigg\|\frac{\omg(t,\cdot)}{r^{d-2}}\bigg\|_{L^{\ift}(\bbR^{d})}  R(t)=\bigg\|\frac{\omg_{0}}{r^{d-2}}\bigg\|_{L^{\ift}(\bbR^{d})}  R(t) \lesssim_{\omg_{0}}\begin{cases}
					A_{1} (1+t)^{4/(6-d)},&\, d=3,4,5,\\
					A_{2}e^{A_{3}t},&\, d=6.
				\end{cases}
			\end{split}
		\end{equation*}  {This} 
		finishes the proof.  \end{proof} 
	
	\appendix
	
	\section{Global estimate for the radial velocity}\label{sec:growth3/2} 
	{In this section, we provide {proofs} 
		of the inequality \eqref{eq:newest} {and Theorem \ref{thm:t3/2}} 
		which we introduced in \S\ref{sec:intro}.}
	
	\begin{prop}\label{prop:estimate}
		Let $\omg$ satisfy  
		$r^{-1}\omg\in L^{\ift}(\bbR^{3})$ and $r\omg
		\in L^{1}(\bbR^{3})$. Furthermore, assume that the corresponding velocity $u = (u^{r}, u^{z})$ belongs to $L^{2}(\bbR^{3})$. Then, $u^{r}$ is uniformly bounded in space, with 
		\begin{equation}\label{eq:uiftnewest}
			\nrm{u^{r}}_{L^{\ift}(\bbR^{3})} \le C\nrm{u}_{L^{2}(\bbR^{3})}^{1/3}\bigg\|\frac{\omg}{r}\bigg\|_{L^{\ift}(\bbR^{3})}^{1/2}\nrm{r\omg}_{L^{1}(\bbR^{3})}^{1/6}
		\end{equation} for a universal constant $C>0$. 
	\end{prop}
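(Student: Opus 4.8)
The plan is to bound $|u^{r}(r,z)|$ at an \emph{arbitrary} point $(r,z)\in\Pi$ via the Feng--Sverak representation \eqref{eq:asBiotSavart_ur}, mirroring the three-region scheme of Proposition \ref{prop:urRz} but with the role played there by the maximal support radius now taken over by $\nrm{r\omg}_{L^{1}}$. Fixing $(r,z)$ and two scales $0<a<b$ to be optimized at the end, I would insert the cutoffs $\Phi_{a},\Phi_{b}$ from \eqref{eq:Phi} centered at $(r,z)$ and use the partition $\Phi_{a}+(\Phi_{b}-\Phi_{a})+(1-\Phi_{b})=1$ to write
\[ u^{r}(r,z)=\iint_{\Pi}F^{r}\Phi_{a}\,\omg+\iint_{\Pi}F^{r}(\Phi_{b}-\Phi_{a})\,\omg+\iint_{\Pi}F^{r}(1-\Phi_{b})\,\omg=:(N)+(M)+(F). \]

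For the near piece $(N)$ (distances $D\lesssim a$) I would use the bound $|F^{r}|\lesssim \br{r}^{1/2}r^{-1/2}D^{-1}$ coming from the $s^{-1}$ branch of Lemma \ref{lem:calFest}, together with $|\omg|\le \br{r}\,\nrm{\omg/r}_{L^{\infty}}$, to get $|(N)|\lesssim r\,a\,\nrm{\omg/r}_{L^{\infty}(\bbR^{3})}$. For the intermediate piece $(M)$ I would integrate by parts using the planar divergence-free identity $\omg=\rd_{\br{r}}u^{z}-\rd_{\br{z}}u^{r}$ from \eqref{eq:ansatz}, exactly as in \eqref{eq:I21'ibp}, moving one derivative onto the kernel and the cutoffs and thereby trading $\omg$ for $u=(u^{r},u^{z})$; the derivative bounds $|\rd F^{r}|\lesssim \br{r}^{1/2}r^{-1/2}D^{-2}$ followed by Cauchy--Schwarz with the metric weight $\br{r}$ (so that $\nrm{\br{r}^{1/2}u}_{L^{2}(\Pi)}\sim\nrm{u}_{L^{2}(\bbR^{3})}$) then give $|(M)|\lesssim r^{-1/2}a^{-1}\nrm{u}_{L^{2}(\bbR^{3})}$. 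For the far piece $(F)$ I would exploit the enhanced decay of the axisymmetric kernel: combining both branches of Lemma \ref{lem:calFest} yields the uniform estimate $|F^{r}|/\br{r}^{2}\lesssim r\,b^{-4}$ on $\{D>b\}$ — the crucial point being that this holds in \emph{both} the two-dimensional regime $D\lesssim(r\br{r})^{1/2}$ and the genuinely three-dimensional regime $D\gtrsim(r\br{r})^{1/2}$ — so that pulling out this supremum and recognizing $\iint \br{r}^{2}|\omg|\sim\nrm{r\omg}_{L^{1}(\bbR^{3})}$ produces $|(F)|\lesssim r\,b^{-4}\nrm{r\omg}_{L^{1}(\bbR^{3})}$.

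It then remains to turn $|u^{r}(r,z)|\lesssim r a\nrm{\omg/r}_{L^{\infty}}+r^{-1/2}a^{-1}\nrm{u}_{L^{2}}+r b^{-4}\nrm{r\omg}_{L^{1}}$ into a single position-independent bound, and this is where I expect the main obstacle to lie. The difficulty is that a naive optimization decouples: the intermediate term $(M)$ is governed by its inner scale $a$ and is essentially insensitive to $b$, so balancing $(N)$ against $(M)$ alone merely reproduces the position-dependent quantity $r^{1/4}\nrm{u}_{L^{2}}^{1/2}\nrm{\omg/r}_{L^{\infty}}^{1/2}$ (the second term of Proposition \ref{prop:urRz}), while the far term $(F)$ threatens to be discarded by sending $b\to\infty$. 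The crux is therefore to organize the three regions so that $(F)$ genuinely caps the evaluation radius — equivalently, to exploit that $\nrm{r\omg}_{L^{1}}$ controls the combination $r^{3/2}\nrm{u}_{L^{2}}$ at the relevant radius — after which the exponents $\tfrac13,\tfrac12,\tfrac16$ forced by the scaling heuristic behind \eqref{eq:FS} (homogeneity, the three-dimensional scaling law, and two-dimensionalization) must emerge. Carrying this out rigorously and uniformly in $(r,z)$ requires a careful joint choice of $a$ and $b$ and, in particular, a delicate treatment of evaluation points near the axis $\{r=0\}$ and of source points straddling the kernel transition at $D\sim(r\br{r})^{1/2}$.
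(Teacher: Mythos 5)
Your building blocks are sound and are essentially the ones the paper uses---the two-branch kernel bounds of Lemma \ref{lem:calFest}, and integration by parts via $\omg=\rd_{\br{r}}u^{z}-\rd_{\br{z}}u^{r}$ with the metric weight $\br{r}^{1/2}$ so that Cauchy--Schwarz produces $\nrm{r^{1/2}u}_{L^{2}(\Pi)}\sim\nrm{u}_{L^{2}(\bbR^{3})}$---but the proposal stops exactly where the proof has to happen, and the assembly you sketch cannot be completed as stated. The first missing idea is that no uniform-in-$(r,z)$ optimization is needed at all: the right-hand side of \eqref{eq:uiftnewest} is \emph{exactly scale-invariant} under $u\mapsto u(\lmb r,\lmb z+z_{0})$, $\omg\mapsto\lmb\,\omg(\lmb r,\lmb z+z_{0})$ (the three factors scale as $\lmb^{-1/2}$, $\lmb^{+1}$, $\lmb^{-1/2}$, and $\nrm{u^{r}}_{L^{\ift}}$ is invariant), so it suffices to bound $u^{r}(1,0)$; this is Remark \ref{rem:1,0} in the paper. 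That normalization removes in one stroke the position dependence that derails your optimization: the outer scale is then \emph{fixed} at $1/2$ rather than sent to infinity, so your worry that $(F)$ gets discarded as $b\to\infty$ never arises, and there is no need for $\nrm{r\omg}_{L^{1}}$ to ``cap the evaluation radius.''

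The second gap is quantitative and survives the normalization: even at $(1,0)$, your three bounds cannot be balanced to the exponents $(1/3,1/2,1/6)$. Writing $E=\nrm{u}_{L^{2}(\bbR^{3})}$, $\Lmb=\nrm{\omg/r}_{L^{\ift}(\bbR^{3})}$, $W=\nrm{r\omg}_{L^{1}(\bbR^{3})}$, your pure sup-bound $(N)\lesssim a\Lmb$ against $(M)\lesssim a^{-1}E$ balances to $E^{1/2}\Lmb^{1/2}$, which is \emph{not} $\lesssim E^{1/3}\Lmb^{1/2}W^{1/6}$ whenever $W\ll E$ (in normalized units), and your pure $L^{1}$ far bound has $W$ to the first power, which is homogeneity-incompatible with the target. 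The paper's mechanism is different in both regions: the far integral $(I_{1}'')$ over $\{D\gtrsim 1/4\}$ is estimated \emph{twice}---once by kernel decay with the H\"older split $|\omg|=|\omg|^{3/4}|\omg|^{1/4}$ and the weight $\br{r}^{2}|\omg|$, giving $\Lmb^{3/4}W^{1/4}$ in \eqref{eq:I1nonlocest1}, and once by integration by parts, giving $E$---and the two are combined as the geometric mean $|(I_{1}'')|=|(I_{1}'')|^{1/3}|(I_{1}'')|^{2/3}\lesssim E^{1/3}\Lmb^{1/2}W^{1/6}$ in \eqref{eq:I1dp}. Likewise the innermost disc is handled not by $k\Lmb$ (which with the optimal $k$ would give $E^{2/3}\Lmb^{1/2}W^{-1/6}$, a wrong-signed exponent on $W$) but by the same H\"older, yielding $\sqrt{\bar{k}}\,\Lmb^{3/4}W^{1/4}$ in \eqref{eq:I22dp}; with the single inner scale $k=E^{2/3}\Lmb^{-1/2}W^{-1/6}$ of \eqref{eq:ratiok}, this term and the annulus IBP bound $k^{-1}E$ both equal the target. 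So the organizing principle you were searching for is interpolation---$W$ enters through the weight $\br{r}^{2}|\omg|$ inside H\"older in \emph{both} the near and far regions, plus a geometric-mean combination of two complete estimates of the same far integral---and that principle, together with the scaling reduction, is genuinely absent from the proposal.
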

	
	\begin{rem}\label{rem:1,0}
		Due to invariances of the Biot--Savart formula with respect to the following scaling and translation in $ z $
		\begin{equation*}
			u(r,z)\mapsto u(\lmb r,\lmb z+z_{0})\quad\text{and}\quad \omg(r,z)\mapsto \lmb\omg(\lmb r, \lmb z+z_{0})\quad\text{for any}\quad(\lmb,z_{0})\in\bbR_{>0}\times\bbR,
		\end{equation*}
		to prove Proposition \ref{prop:estimate}, it suffices to show
		\begin{equation}\label{eq:1,0}
			|u^{r}(1,0)|\lesssim\nrm{u}_{L^{2}(\bbR^{3})}^{1/3}\bigg\|\frac{\omg}{r}\bigg\|_{L^{\ift}(\bbR^{3})}^{1/2}\nrm{r\omg}_{L^{1}(\bbR^{3})}^{1/6}.
		\end{equation}
	\end{rem}

	\subsection{Proof of Proposition \ref{prop:estimate} 
	}\label{ssec:ur}	
	
	Our goal is to prove \eqref{eq:1,0} from Remark \ref{rem:1,0}. To do this, 
	{we define the rescaled cutoff functions}
	\begin{equation}\label{eq:phi}
		\phi_{k}(\br{r},\br{z}):=\chi\bigg(\frac{D(1,0,\br{r},\br{z})}{k}\bigg) ,\quad k>0.
	\end{equation}
	
	\begin{proof}[Proof of Proposition \ref{prop:estimate}]  
		Using the function $ \phi_{1/2} $, where we took $ k=1/2 $, we split the term $ u^{r}(1,0) $ into
		\begin{equation}\label{eq:ursplit}
			\begin{split}
				&\ \underbrace{\iint_{\Pi}F^{r}(1,0,\br{r},\br{z})\big(1-\phi_{1/2}(\br{r},\br{z})\big)\omg(\br{r},\br{z})d\br{z}d\br{r}}_{=:(I_{1}'')} + \underbrace{\iint_{\Pi}F^{r}(1,0,\br{r},\br{z})\phi_{1/2}(\br{r},\br{z})\omg(\br{r},\br{z})d\br{z}d\br{r}}_{=:(I_{2}'')}.
			\end{split}
		\end{equation}
		For notational convenience, let us  {simply write} 
		$ (I_{1}'') $ and $ (I_{2}'') $ as
		$$ (I_{1}'')=\iint_{\Pi}F^{r}(1-\phi_{1/2})\omg,\quad (I_{2}'')=\iint_{\Pi}F^{r}\phi_{1/2}\omg. $$
		{In addition, we simplify $D(1,0,\br{r},\br{z})$ as $D_{1,0}$.}

		\medskip
		
		\noindent \textbf{Estimate of $(I_{1}'') $.} 
		We are going to estimate $(I_1'')$ {, which is supported on $ \Pi\setminus B_{1/4}(1,0)$,} in two different ways, and combine the inequalities by splitting $ (I_{1}'')=(I_{1}'')^{1/3}(I_{1}'')^{2/3}$. The first estimate follows  {by using the estimate}
		\begin{equation}\label{eq:Frnonloc}
			\left| F^{r}(1,0,\br{r},\br{z}) \right| =\frac{1}{\pi} \left| \frac{\br{z}}{\br{r}^{1/2}} \calF'\bigg(\frac{D_{1,0}^{{2}}
			}{\br{r}}\bigg) \right| \lesssim\frac{|\br{z}|}{\br{r}^{1/2}} \frac{\br{r}^{5/2}}{{D_{1,0}^{5}}
			}\leq\frac{\br{r}^{2}}{{D_{1,0}^{4}}
			}.
		\end{equation} 
		Using this, we  {have}
		\begin{equation*}
			\begin{split}
				|(I_{1}'')|&\lesssim\iint_{{\Pi\setminus B_{1/4}(1,0)}}\frac{\br{r}^{2}}{{D_{1,0}^{4}}
				} 
				|\omg|^{3/4} |\omg|^{1/4} \frac{\br{r}^{3/4}}{\br{r}^{3/4}}\leq\bigg\|\frac{\omg}{r}\bigg\|_{L^{\ift}(\Pi)}^{3/4} \iint_{{\Pi\setminus B_{1/4}(1,0)
					}
				}\frac{\br{r}^{9/4}}{{D_{1,0}^{4}}
				} 
				(\br{r}^{1/2}|\omg|^{1/4}).
			\end{split}
		\end{equation*}
		{Here, we use the  {inequality $\br{r}\lesssim D_{1,0}$,} 
			{which holds} on the region $\Pi\setminus B_{1/4}(1,0)$,} 
		to finish the first estimate of $ (I_{1}'') $:
		\begin{align}
			|(I_{1}'')|&\lesssim\bigg\|\frac{\omg}{r}\bigg\|_{L^{\ift}(\Pi)}^{3/4} \iint_{{\Pi\setminus B_{1/4}(1,0)
				}
			}\frac{\br{r}^{9/4}}{{D_{1,0}^{4}}
			} 
			(\br{r}^{1/2}|\omg|^{1/4})\lesssim\bigg\|\frac{\omg}{r}\bigg\|_{L^{\ift}(\Pi)}^{3/4} \iint_{{\Pi\setminus B_{1/4}(1,0)
				}
			}\frac{1}{{D_{1,0}^{7/4}}
			} 
			(\br{r}^{1/2}|\omg|^{1/4})\nonumber\\
			&\leq\bigg\|\frac{\omg}{r}\bigg\|_{L^{\ift}(\Pi)}^{3/4} \bigg(\iint_{{\Pi\setminus B_{1/4}(1,0)
				}
			}\frac{1}{{D_{1,0}^{7/3}}
			} 
			\bigg)^{3/4} \bigg(\iint_{{\Pi\setminus B_{1/4}(1,0)
				}
			}\br{r}^{2}|\omg|\bigg)^{1/4}\nonumber\\
			&\lesssim\bigg\|\frac{\omg}{r}\bigg\|_{L^{\ift}(\Pi)}^{3/4}\nrm{r^{2}\omg}_{L^{1}(\Pi)}^{1/4}{\sim\bigg\|\frac{\omg}{r}\bigg\|_{L^{\ift}(\bbR^{3})}^{3/4}\nrm{r\omg}_{L^{1}(\bbR^{3})}^{1/4}.}\label{eq:I1nonlocest1}
		\end{align}

		Another estimate  of $ (I_{1}'') $ follows from an integration by parts, which gives a representation 
		\begin{equation}\label{eq:I1ibp}
			\begin{split}
				(I_{1}'')=&\ \iint_{\Pi}\Big[\big[\rd_{\br{z}}\big(F^{r}(1-\phi_{1/2})\big)\big]u^{r}-\big[\rd_{\br{r}}\big(F^{r}(1-\phi_{1/2})\big)\big]u^{z}\Big]\\
				=&{\underbrace{\iint_{\Pi}(\rd_{\br{z}}F^{r})(1-\phi_{1/2})u^{r}}_{=:(I_{11}'')}-\underbrace{\iint_{\Pi}F^{r}(\rd_{\br{z}}\phi_{1/2})u^{r}}_{=:(I_{12}'')}}  {-\underbrace{\iint_{\Pi}(\rd_{\br{r}}F^{r})(1-\phi_{1/2})u^{z}}_{=:(I_{13}'')}+\underbrace{\iint_{\Pi}F^{r}(\rd_{\br{r}}\phi_{1/2})u^{z}}_{=:(I_{14}'')}.}
			\end{split}
		\end{equation} We compute  		
		\begin{equation}\label{eq:rdzFr}
			\begin{split}
				\rd_{\br{z}}F^{r}(1,0,\br{r},\br{z})&=\frac{1}{\pi} \frac{1}{\br{r}^{1/2}} \bigg[\calF'\bigg(\frac{{D_{1,0}^{2}}
				}{\br{r}}\bigg)+\frac{2\br{z}^{2}}{\br{r}} \calF''\bigg(\frac{{D_{1,0}^{2}}
				}{\br{r}}\bigg)\bigg],
			\end{split}
		\end{equation}
		and
		\begin{equation}\label{eq:rdrFr}
			\begin{split}
				\rd_{\br{r}}F^{r}(1,0,\br{r},\br{z})&=-\frac{1}{2\pi} \frac{\br{z}}{\br{r}^{5/2}} \bigg[\br{r} \calF'\bigg(\frac{{D_{1,0}^{2}}
				}{\br{r}}\bigg)-2[(\br{r}-1)(\br{r}+1)-\br{z}^{2}] \calF''\bigg(\frac{{D_{1,0}^{2}}
				}{\br{r}}\bigg)\bigg].
			\end{split}
		\end{equation}
		For the term \eqref{eq:rdzFr}, 
		we  {have the estimate}
		\begin{equation}\label{eq:rdzFrnonloc}
			\begin{split}
				|\rd_{\br{z}}F^{r}(1,0,\br{r},\br{z})|&\lesssim\frac{1}{\br{r}^{1/2}} \bigg[\frac{\br{r}^{5/2}}{{D_{1,0}^{5}}
				}+\frac{\br{z}^{2}}{\br{r}} \frac{\br{r}^{7/2}}{{D_{1,0}^{7}}
				}\bigg]\lesssim\frac{\br{r}^{2}}{{D_{1,0}^{5}}
				},
			\end{split}
		\end{equation}
		{which holds on the region $\Pi \setminus B_{1/4}(1,0)$.}
		{We use this to get}
		\begin{align}
			{|(I_{11}'')|}
			&\lesssim\iint_{{\Pi\setminus B_{1/4}(1,0)}}\frac{\br{r}^{3/2}}{{D_{1,0}^{5}}
			} 
			(\br{r}^{1/2}|u^{r}|)\lesssim\iint_{{\Pi\setminus B_{1/4}(1,0)}}\frac{1}{{D_{1,0}^{7/2}}
			} 
			(\br{r}^{1/2}|u^{r}|)\nonumber\\
			&\leq\bigg(\iint_{{\Pi\setminus B_{1/4}(1,0)}}\frac{1}{{D_{1,0}^{7}}
			} 
			\bigg)^{1/2} \bigg(\iint_{{\Pi\setminus B_{1/4}(1,0)}}\br{r}|u^{r}|^{2}\bigg)^{1/2} \lesssim\nrm{r^{1/2}u}_{L^{2}(\Pi)}{\sim\nrm{u}_{L^{2}(\bbR^{3})}}.\label{eq:rdzFrphiur}
		\end{align}
		Next, we us the estimate \eqref{eq:Frnonloc} to get
		\begin{align}
			{|(I_{12}'')|}
			&\lesssim\iint_{\Pi}\frac{\br{r}^{3/2}}{{D_{1,0}^{4}}
			} \chi'(2{D_{1,0}}
			) \frac{|\br{z}|}{{D_{1,0}}
			} (\br{r}^{1/2}|u^{r}|)\lesssim\iint_{{B_{1/2}(1,0)\setminus B_{1/4}(1,0)}}\frac{1}{{D_{1,0}^{4}}
			}
			(\br{r}^{1/2}|u^{r}|).\nonumber
		\end{align}
		The second inequality follows from the fact that the integrand above is supported on the region $ {D_{1,0}}
		\sim1 $. 
		Finishing this estimate, we have
		\begin{align}
			{|(I_{12}'')|}
			&\lesssim\bigg(\iint_{{B_{1/2}(1,0)\setminus B_{1/4}(1,0)}}\frac{1}{{D_{1,0}^{8}}
			} 
			\bigg)^{1/2} \bigg(\iint_{{B_{1/2}(1,0)\setminus B_{1/4}(1,0)}}\br{r}|u^{r}|^{2}\bigg)^{1/2} \lesssim \nrm{u}_{L^{2}(\bbR^{3})}.\label{eq:Frrdzphiur}
		\end{align}
		{To estimate} the term \eqref{eq:rdrFr}, 
		{we use the  {inequality}
			$$ |(\br{r}-1)(\br{r}+1)-\br{z}^{2}|^{1/2}\lesssim D_{1,0}, $$
			{which holds }on the region $\Pi\setminus B_{1/4}(1,0)$.}
		We use this to estimate the term \eqref{eq:rdrFr} as
		\begin{align}
			|\rd_{\br{r}}F^{r}(1,0,\br{r},\br{z})|&\lesssim\frac{|\br{z}|}{\br{r}^{5/2}} \bigg[\br{r} \frac{\br{r}^{5/2}}{{D_{1,0}^{5}}
			}+|(\br{r}-1)(\br{r}+1)-\br{z}^{2}| \frac{\br{r}^{7/2}}{{D_{1,0}^{7}}
			}\bigg]
			\lesssim\frac{\br{r}}{{D_{1,0}^{4}}
			},\label{eq:rdrFrnonloc}
		\end{align}
		{where it holds on the region $\Pi \setminus B_{1/4}(1,0)$ as well.} 
		Using this, we  {get}
		\begin{align}
			{|(I_{13}'')|}
			&\lesssim\iint_{{\Pi\setminus B_{1/4}(1,0)}}\frac{\br{r}^{1/2}}{{D_{1,0}^{4}}
			} 
			(\br{r}^{1/2}|u^{{z}}|)\lesssim\iint_{{\Pi\setminus B_{1/4}(1,0)}}\frac{1}{{D_{1,0}^{7/2}}
			} 
			(\br{r}^{1/2}|u^{{z}}|) \lesssim {\nrm{u}_{L^{2}(\bbR^{3})}}.\label{eq:rdrFrphiur}
		\end{align}
		Lastly, we obtain
		\begin{align}
			{|(I_{14}'')|}
			&\lesssim\iint_{\Pi}\frac{\br{r}^{3/2}}{{D_{1,0}^{4}}
			} \chi'(2{D_{1,0}}
			) \frac{|\br{r}-1|}{{D_{1,0}}
			} (\br{r}^{1/2}|u^{{z}}|)\lesssim\iint_{{B_{1/2}(1,0)\setminus B_{1/4}(1,0)}}\frac{\br{r}^{1/2}|u^{{z}}|}{{D_{1,0}^{4}}
			} 
			\lesssim {\nrm{u}_{L^{2}(\bbR^{3})}}.\label{eq:Frrdrphiur}
		\end{align}
		Now we may collect the estimates  {above}  
		to conclude
		\begin{align}
			|(I_{1}'')|&=|(I_{1}'')|^{1/3}|(I_{1}'')|^{2/3}\lesssim{
				\nrm{u}_{L^{2}(\bbR^{3})
					}^{1/3}
					\bigg(
					\bigg\|\frac{\omg}{r}\bigg\|_{L^{\ift}(\bbR^{3})}^{3/4}\nrm{r\omg}_{L^{1}(\bbR^{3})}^{1/4}
					\bigg)^{2/3}}\nonumber\\
				&=\nrm{u}_{L^{2}(\bbR^{3})}
				^{1/3}
				\bigg\|\frac{\omg}{r}\bigg\|_{L^{\ift}(\bbR^{3})}^{1/2}
				\nrm{r\omg}_{L^{1}(\bbR^{3})}
				^{1/6}.
				\label{eq:I1dp}
			\end{align}

			\medskip
			
			\noindent \textbf{Estimate of $(I_{2}'') $:  {splitting further.}} To estimate $ (I_{2}'') $,  {we define the ratio}
			\begin{equation}\label{eq:ratiok}
				k={\nrm{u}_{L^{2}(\bbR^{3})}
					^{{2/3}}}
				{\bigg\|\frac{\omg}{r}\bigg\|_{L^{\ift}(\bbR^{3})}
					^{{-1/2}}}
				{\nrm{r\omg}_{L^{1}(\bbR^{3})}
					^{{-1/6}},}
			\end{equation} 
			and split $ (I_{2}'') $ into
			\begin{equation*}
				\begin{split}
					\underbrace{\iint_{\Pi}F^{r}(1-\phi_{k})\phi_{1/2}\omg}_{=:(I_{21}'')}+\underbrace{\iint_{\Pi}F^{r}\phi_{k}\phi_{1/2}\omg}_{=:(I_{22}'')}.
				\end{split}
			\end{equation*}
			
			\medskip
			
			\noindent \textbf{Estimate of $(I_{22}'')$.}  {We define $\bar{k}:=\min\lbrace k,1\rbrace$ and estimate $(I_{22}'')$ as}
			\begin{align}
				|(I_{22}'')|&{\leq}
				\iint_{{B_{\bar{k}}(1,0)}}|F^{r}
				\omg|
				\lesssim\iint_{{B_{\bar{k}}(1,0)}}\frac{\br{r}^{1/2}}{{D_{1,0}}
				} |
				\omg|^{3/4} |
				\omg|^{1/4} 
				\frac{\br{r}^{3/4}}{\br{r}^{3/4}}\\
				&\lesssim\bigg\|\frac{\omg}{r}\bigg\|_{L^{\ift}({B_{\bar{k}}(1,0)})}^{3/4}
				\bigg(\iint_{{B_{\bar{k}}(1,0)}}\frac{1}{{D_{1,0}^{4/3}}
				} 
				\bigg)^{3/4} \bigg(\iint_{{B_{\bar{k}}(1,0)}}
				{\br{r}^{2}}|\omg|\bigg)^{1/4}\\
				&
				\lesssim \sqrt{\bar{k}} \bigg\|\frac{\omg}{r}\bigg\|_{L^{\ift}({B_{\bar{k}}(1,0)})}^{3/4}\nrm{r^{2}\omg}_{L^{1}({B_{\bar{k}}(1,0)})}^{1/4}\lesssim\sqrt{\bar{k}} \bigg\|\frac{\omg}{r}\bigg\|_{L^{\ift}(\bbR^{3})}^{3/4}\nrm{r\omg}_{L^{1}(\bbR^{3})}^{1/4}\nonumber\\
				&\leq\nrm{u}_{L^{2}(\bbR^{3})}^{1/3}\bigg\|\frac{\omg}{r}\bigg\|_{L^{\ift}(\bbR^{3})}^{1/2}\nrm{r\omg}_{L^{1}(\bbR^{3})}^{1/6}.\label{eq:I22dp}
			\end{align}
			\medskip
			
			\noindent \textbf{Estimate of $(I_{21}'')$.}  {Let us assume $k<1$ because $(I_{21}'')$ just vanishes otherwise.} 
			We use integration by parts to get
			\begin{equation}\label{eq:I21ibp}
				\begin{split}
					(I_{21}'')=&\ \iint_{\Pi}\Big[\ \big[\rd_{\br{z}}\big(F^{r}(1-\phi_{k})\phi_{1/2}\big)\big]u^{r}-\big[\rd_{\br{r}}\big(F^{r}(1-\phi_{k})\phi_{1/2}\big)\big]u^{z}\Big]\\
					=&\ {\underbrace{\iint_{\Pi}(\rd_{\br{z}}F^{r})(1-\phi_{k})\phi_{1/2}u^{r}}_{=:(I_{211}'')}-\underbrace{\iint_{\Pi}F^{r}(\rd_{\br{z}}\phi_{k})\phi_{1/2}u^{r}}_{=:(I_{212}'')}+\underbrace{\iint_{\Pi}F^{r}(1-\phi_{k})(\rd_{\br{z}}\phi_{1/2})u^{r}}_{=:(I_{213}'')}}\\
					&{-\underbrace{\iint_{\Pi}(\rd_{\br{r}}F^{r})(1-\phi_{k})\phi_{1/2}u^{z}}_{=:(I_{214}'')}+\underbrace{\iint_{\Pi}F^{r}(\rd_{\br{r}}\phi_{k})\phi_{1/2}u^{z}}_{=:(I_{215}'')}-\underbrace{\iint_{\Pi}F^{r}(1-\phi_{k})(\rd_{\br{r}}\phi_{1/2})u^{z}}_{=:(I_{216}'')}.}
				\end{split}
			\end{equation}
			Now let us estimate each term. First, we get
			\begin{align}
				{|(I_{211}'')|}
				&\lesssim\iint_{{B_{1/2}(1,0)\setminus B_{k/2}(1,0)}}\frac{\br{r}^{1/2}}{{D_{1,0}^{2}}
				} 
				|
				u^{r}|
				\nonumber\\
				&\leq\bigg(\iint_{{B_{1/2}(1,0)\setminus B_{k/2}(1,0)}}\frac{1}{{D_{1,0}^{4}}
				} 
				\bigg)^{1/2} \bigg(\iint_{{B_{1/2}(1,0)\setminus B_{k/2}(1,0)}}{\br{r}}|
				u^{r}|^{2}\bigg)^{1/2} 
				\lesssim  \frac{1}{k} \nrm{u}_{L^{2}(\bbR^{3})}.\label{eq:rdzFrphikphi1/2ur}
			\end{align}
			Next, we have
			\begin{align}
				{|(I_{212}'')|}
				&\lesssim\iint_{{B_{1/2}(1,0)}}\frac{\br{r}^{1/2}}{{D_{1,0}}
				} \frac{1}{k} \chi'\bigg(\frac{{D_{1,0}}
				}{k}\bigg) \frac{|\br{z}|}{{D_{1,0}}
				} |
				u^{r}|\lesssim\iint_{{B_{1/2}(1,0)\setminus B_{k/2}(1,0)}}\frac{1}{{D_{1,0}^{2}}
				} 
				{\br{r}^{1/2}}|
				u^{r}|.\nonumber\\
				&\leq\bigg(\iint_{{B_{1/2}(1,0)\setminus B_{k/2}(1,0)}}\frac{1}{{D_{1,0}^{4}}
				} 
				\bigg)^{1/2} \bigg(\iint_{{B_{1/2}(1,0)\setminus B_{k/2}(1,0)}}{\br{r}}|
				u^{r}|^{2}\bigg)^{1/2} 
				\lesssim  { \frac{1}{k} \nrm{u}_{L^{2}(\bbR^{3})}}.\label{eq:Frrdzphikphi1/2ur}
			\end{align}
			In the second inequality, we used the relation ${D_{1,0}}
			\sim k$, which holds because of the term $\rd_{\br{z}}\phi_{k}$. 
			Also, {we use the relation $D_{1,0}\sim1$ coming from the term $\rd_{\br{z}}\phi_{1/2}$ to get} 
			\begin{align}
				{|(I_{213}'')|}
				&\lesssim\iint_{{\Pi\setminus B_{k/2}(1,0)}}\frac{\br{r}^{1/2}}{{D_{1,0}}
				} 
				\chi'(2{D_{1,0}}
				) \frac{|\br{z}|}{{D_{1,0}}
				} |u^{r}|\lesssim\iint_{{B_{1/2}(1,0)\setminus B_{k/2}(1,0)}}\frac{1}{{D_{1,0}^{2}}
				} 
				{\br{r}^{1/2}}|
				u^{r}| 
				\lesssim {\frac{1}{k} \nrm{u}_{L^{2}(\bbR^{3})}}.
				\label{eq:Frphikrdzphi1/2ur}
			\end{align}
			Now  we can use the inequality $ |(\br{r}-1)(\br{r}+1)-\br{z}^{2}|\lesssim{D_{1,0}}$  on the disc $B_{1/2}(1,0)$ to get
			\begin{align}
				|\rd_{\br{r}}F^{r}(1,0,\br{r},\br{z})|&\lesssim\frac{|\br{z}|}{\br{r}^{5/2}} \bigg[\br{r} \frac{\br{r}}{{D_{1,0}^{2}}
				}+|(\br{r}-1)(\br{r}+1)-\br{z}^{2}| \frac{\br{r}^{2}}{{D_{1,0}^{4}}
				}\bigg] \lesssim\frac{1}{{D_{1,0}}
				}+\frac{1}{{D_{1,0}^{2}}
				}\lesssim\frac{1}{{D_{1,0}^{2}}
				}.\label{eq:rdrFrloc}
			\end{align}
			Using this, we have
			\begin{align}
				{|(I_{214}'')|}
				&\lesssim\iint_{{B_{1/2}(1,0)\setminus B_{k/2}(1,0)}}\frac{1}{{D_{1,0}^{2}}
				} 
				|
				u^{{z}}|\lesssim\iint_{{B_{1/2}(1,0)\setminus B_{k/2}(1,0)}}\frac{1}{{D_{1,0}^{2}}
				} 
				{\br{r}^{1/2}}|
				u^{{z}}|  {\lesssim\frac{1}{k} \nrm{u}_{L^{2}(\bbR^{3})}}\label{eq:rdrFrphikphi1/2ur}
			\end{align}
			and similarly, we get
			\begin{equation}\label{eq:Frrdrphikphi1/2ur}
				{|(I_{215}'')|+|(I_{216}'')|}
				\lesssim
				{\frac{1}{k} \nrm{u}_{L^{2}(\bbR^{3})}}.
			\end{equation}
			Thus, gathering all the estimates above, we obtain
			\begin{equation}\label{eq:I21dp}
				\begin{split}
					|(I_{21}'')|& {\lesssim
						\frac{1}{k} \nrm{u}_{L^{2}(\bbR^{3})}\leq\nrm{u}_{L^{2}(\bbR^{3})}^{1/3}\bigg\|\frac{\omg}{r}\bigg\|_{L^{\ift}(\bbR^{3})}^{1/2}\nrm{r\omg}_{L^{1}(\bbR^{3})}^{1/6}.}
				\end{split}
			\end{equation}
			{We used the definition of $k$ in the last inequality.}
			
			\medskip
			
			\noindent {\textbf{Conclusion.} Summing up \eqref{eq:I22dp} and \eqref{eq:I21dp}, we have}
			\begin{equation}\label{eq:I2dp}
				{|(I_{2}'')|\lesssim\nrm{u}_{L^{2}(\bbR^{3})}^{1/3}\bigg\|\frac{\omg}{r}\bigg\|_{L^{\ift}(\bbR^{3})}^{1/2}\nrm{r\omg}_{L^{1}(\bbR^{3})}^{1/6}.}
			\end{equation}
			{Hence, adding \eqref{eq:I1dp} and \eqref{eq:I2dp}, the proof is done.} 
		\end{proof} 

		{Now we prove Theorem \ref{thm:t3/2} using Proposition \ref{prop:estimate}. \begin{proof}[Proof of Theorem \ref{thm:t3/2}]
				We let $t\geq0$, and we define the \textit{length} function $L(t)$ as
				\begin{equation}\label{eq:lengthftn}
					L(t):=1+\int_{0}^{t}\nrm{u^{r}(s,\cdot)}_{L^{\ift}(\bbR^{3})}ds.
				\end{equation}
				
				\medskip
				
				\noindent \textbf{Claim}. We have the estimates
				\begin{align}
					\nrm{r\omg(t,\cdot)}_{L^{1}(\bbR^{3})}&\leq \bigg(\bigg\|\frac{\omg_{0}}{r}\bigg\|_{L^{1}(\bbR^{3})}+\nrm{r\omg_{0}}_{L^{1}(\bbR^{3})}\bigg)L(t)^{2},\label{eq:claim1}\\
					\nrm{\omg(t,\cdot)}_{L^{\ift}(\bbR^{3})}&\leq \bigg(\bigg\|\frac{\omg_{0}}{r}\bigg\|_{L^{\ift}(\bbR^{3})}+\nrm{\omg_{0}}_{L^{\ift}(\bbR^{3})}\bigg)L(t).\label{eq:claim2}
				\end{align}
				Once this \textbf{Claim} is shown, then  \eqref{eq:claim1}  and conservation of $\nrm{u}_{L^{2}}$ and the $L^{p}$-norms of $r^{-1}\omg$ with $p\in [1,\ift]$ imply that we have
				\begin{equation*}
					\begin{split}
						\frac{d}{dt}L(t)&=\nrm{u^{r}(t,\cdot)}_{L^{\ift}(\bbR^{3})}\\
						&\lesssim\nrm{u(t,\cdot)}_{L^{2}(\bbR^{3})}^{1/3}\bigg\|\frac{\omg(t,\cdot)}{r}\bigg\|_{L^{\ift}(\bbR^{3})}^{1/2}\nrm{r\omg(t,\cdot)}_{L^{1}(\bbR^{3})}^{1/6}\\
						&\leq\nrm{u_{0}}_{L^{2}(\bbR^{3})}^{1/3}\bigg\|\frac{\omg_{0}}{r}\bigg\|_{L^{\ift}(\bbR^{3})}^{1/2}\bigg(\bigg\|\frac{\omg_{0}}{r}\bigg\|_{L^{1}(\bbR^{3})}+\nrm{r\omg_{0}}_{L^{1}(\bbR^{3})}\bigg)^{1/6}L(t)^{1/3}
						\lesssim_{\omg_{0}}L(t)^{1/3}.
					\end{split}
				\end{equation*}
				From this, we have
				\begin{equation*}
					L(t)\lesssim_{\omg_{0}}(1+t)^{3/2},
				\end{equation*}
				where this and the second estimate \eqref{eq:claim2} from our claim give us the desired estimate
				\begin{equation}
					\nrm{\omg(t,\cdot)}_{L^{\ift}(\bbR^{3})}\lesssim_{\omg_{0}}(1+t)^{3/2}.
				\end{equation}
				
				It only remains to prove the \textbf{Claim}. To do this, we use the flow map $\Phi_{t}$ which solves the ODE
				\begin{equation*}
					\frac{d}{dt}\Phi_{t}(a)=u(t,\Phi_{t}(a)),\quad \Phi_{0}(a)=a.
				\end{equation*}
				We let $x\in \bbR^{3}$ and use notations $r_{x}:=(x_{1}^{2}+x_{2}^{2})^{1/2}$ and $z_{x}:=x_{3}$. 
				In addition, we denote $\Phi_{t}^{r}$ and $\Phi_{t}^{z}$ as the radial and vertical component of $\Phi_{t}$, respectively. Also, we take $y=(\Phi_{t})^{-1}(x)$, where $(\Phi_{t})^{-1}$ is the inverse map of $\Phi_{t}$. Then note that we have the following equality:
				\begin{equation*}
					x=\Phi_{t}(y)=\Phi_{0}(y)+\int_{0}^{t}u(s,\Phi_{s}(y))ds=y+\int_{0}^{t}u(s,\Phi_{s}(y))ds,
				\end{equation*}
				where its radial component would be written as
				\begin{equation}\label{eq:flowmaprad}
					r_{x}=\Phi_{t}^{r}(y)=r_{y}+\int_{0}^{t}u^{r}(s,\Phi_{s}(y))ds.
				\end{equation}
				Additionally, we can use the fact that the quantity $r^{-1}\omg$ is conserved in time along the flow map $\Phi_{t}$:
				\begin{equation}\label{eq:flowmapcons}
					\frac{\omg(t,x)}{r_{x}}=\frac{\omg(t,\Phi_{t}(y))}{\Phi_{t}^{r}(y)}=\frac{\omg_{0}(y)}{r_{y}}.
				\end{equation}
				To prove the first estimate \eqref{eq:claim1}, we define the set $\Omg(t):=\lbrace x\in \bbR^{3} : r_{x}\leq L(t)\rbrace$ and split the integral range of the $L^{1}$-norm of $r\omg(t,\cdot)$ into
				\begin{equation*}
					\nrm{r\omg(t)}_{L^{1}(\bbR^{3})}=\underbrace{\int_{\Omg(t)}r_{x}|\omg(t,x)|dx}_{=:(A)}+\underbrace{\int_{\Omg(t)^{c}}r_{x}|\omg(t,x)|dx}_{=:(B)}.
				\end{equation*}
				The term $(A)$ is estimated as
				\begin{equation}\label{eq:estA}
					\begin{split}
						(A)&=\int_{\Omg(t)}\frac{|\omg(t,x)|}{r_{x}}r_{x}^{2}dx\leq \bigg(\int_{\Omg(t)}\frac{|\omg(t,x)|}{r_{x}}dx\bigg)L(t)^{2}\\
						&\leq \bigg\|\frac{\omg_{0}}{r}\bigg\|_{L^{1}(\bbR^{3})}L(t)^{2}.
					\end{split}
				\end{equation}
				To estimate the term $(B)$, we observe that we have the relation
				\begin{equation}\label{eq:Omgtcinv}
					\begin{split}
						(\Phi_{t})^{-1}(\Omg(t)^{c})&=\lbrace (\Phi_{t})^{-1}(x)\in \bbR^{3} : x\in \Omg(t)^{c}\rbrace=\lbrace y\in \bbR^{3} : r_{x}>L(t)\rbrace\\
						&\subset \lbrace y\in \bbR^{3} : r_{y}>1\rbrace.
					\end{split}
				\end{equation}
				Also, in the region $\lbrace y\in \bbR^{3} : r_{y}>1\rbrace$, we have the inequality
				\begin{equation}\label{eq:rxryratio}
					\frac{\Phi_{t}^{r}(y)}{r_{y}}=1+\frac{1}{r_{y}}\int_{0}^{t}u^{r}(s,\Phi_{s}(y))ds\leq 1+\int_{0}^{t}u^{r}(s,\Phi_{s}(y))ds\leq 
					L(t).
				\end{equation}
				Using these two observations, we can estimate the term $(B)$:
				\begin{equation}\label{eq:estB}
					\begin{split}
						(B)&=\int_{\Omg(t)^{c}}\frac{|\omg(t,x)|}{r_{x}}r_{x}^{2}dx=\int_{(\Phi_{t})^{-1}(\Omg(t)^{c})}\frac{|\omg_{0}(y)|}{r_{y}}[\Phi_{t}^{r}(y)]^{2}dy\\
						&\leq \int_{\lbrace r_{y}>1\rbrace}\frac{|\omg_{0}(y)|}{r_{y}}r_{y}^{2}\bigg[\frac{\Phi_{t}^{r}(y)}{r_{y}}\bigg]^{2}dy\leq \nrm{r\omg_{0}}_{L^{1}(\bbR^{3})}L(t)^{2}.
					\end{split}
				\end{equation}
				Gathering estimates \eqref{eq:estA} and \eqref{eq:estB}, we have shown the first estimate \eqref{eq:claim1}. Now to prove the second estimate \eqref{eq:claim2}, we consider the case $x\in \Omg(t)$ and $x\in \Omg(t)^{c}$. For the first case $x\in \Omg(t)$, we use the conservation \eqref{eq:flowmapcons} to get
				\begin{equation}\label{eq:case1}
					|\omg(t,x)|=\frac{|\omg(t,x)|}{r_{x}}r_{x}\leq\frac{|\omg_{0}(y)|}{r_{y}}L(t)\leq\bigg\|\frac{\omg_{0}}{r}\bigg\|_{L^{\ift}(\bbR^{3})}L(t).
				\end{equation}
				For the other case $x\in \Omg(t)^{c}$, we use the conservation \eqref{eq:flowmapcons} and the inequality \eqref{eq:rxryratio} to get
				\begin{equation}\label{eq:case2}
					\begin{split}
						|\omg(t,x)|&=\frac{|\omg(t,x)|}{r_{x}}r_{x}=|\omg_{0}(y)|\frac{\Phi_{t}^{r}(y)}{r_{y}}\leq \nrm{\omg_{0}}_{L^{\ift}(\bbR^{3})}L(t).
					\end{split}
				\end{equation}
				Summing up estimates \eqref{eq:case1} and \eqref{eq:case2}, 
				we obtain the second estimate \eqref{eq:claim2}. This finishes the proof.
			\end{proof}
		}

		\medskip

		\subsection*{Data availability statement} Data sharing is not applicable to this article as no new data were created or analyzed in this study.	
		
		\bibliographystyle{plain}

	\end{document}